\newtheorem*{thma}{Theorem~A}
\newtheorem*{thmb}{Theorem~B}
\newtheorem{theorem}{Theorem}[section]
\newtheorem{lemma}[theorem]{Lemma}
\newtheorem{proposition}[theorem]{Proposition}
\newtheorem{corollary}[theorem]{Corollary}
\newtheorem{claim}[theorem]{Claim}
\newtheorem{question}[theorem]{Question}
\theoremstyle{definition}
\newtheorem{definition}[theorem]{Definition}
\newtheorem{remark}[theorem]{Remark}
\newtheorem{fact}[theorem]{Fact}
\newcommand{\dom}[1]{\ensuremath{\mathrm{dom}}(#1)}
\newcommand{\power}{\ensuremath{\mathscr{P}}}
\newcommand{\mc}{\mathcal}
\newcommand{\bb}{\mathbb}
\newcommand{\beq}{\begin{equation}}
\newcommand{\eeq}{\end{equation}}
\newcommand{\brm}{\begin{remark}\begin{rm}}
\newcommand{\erm}{\end{rm}\end{remark}}
\newcommand{\bce}{\begin{compactenum}}
\newcommand{\ece}{\end{compactenum}}
\newcommand{\cf}{\mathrm{cf}}
\newcommand{\pp}{\mathrm{pp}}
\newcommand{\ssup}{\mathrm{ssup}}
\newcommand{\R}{\bb{R}}
\newcommand{\Q}{\bb{Q}}
\renewcommand{\P}{\bb{P}}
\newcommand{\Coll}{\mathrm{Coll}}
\newcommand{\TP}{{\sf TP}}
\newcommand{\ITP}{{\sf ITP}}
\newcommand{\ISP}{{\sf ISP}}
\newcommand{\ZFC}{\sf ZFC}
\newcommand{\CH}{\sf CH}
\newcommand{\SCH}{{\sf SCH}}
\newcommand{\CP}{\sf CP}
\newcommand{\width}{\mathrm{width}}
\newcommand{\SNSP}{\sf SNSP}
\newcommand{\SSH}{\sf SSH}
\newcommand{\NSP}{{\sf NSP}}
\newcommand{\cNSP}{{\sf cNSP}}
\title{Narrow systems revisited}
\author{Chris Lambie-Hanson}
\address[Lambie-Hanson]{
Institute of Mathematics, 
Czech Academy of Sciences, 
{\v Z}itn{\'a} 25, Prague 1, 
115 67, Czech Republic
}
\email{lambiehanson@math.cas.cz}
\urladdr{https://users.math.cas.cz/~lambiehanson/}
\thanks{The author was supported by the Czech Academy of Sciences 
(RVO 67985840) and the GA\v{C}R project 23-04683S}
\begin{document}

\begin{abstract}
  Motivated by two open questions about two-cardinal tree properties, we introduce and study 
  generalized narrow system properties. The first of these questions asks whether the strong tree 
  property at a regular cardinal $\kappa \geq \omega_2$ implies the Singular Cardinals Hypothesis 
  ($\SCH$) above $\kappa$. We show here that a certain narrow system property at $\kappa$ 
  that is closely related to the strong tree property, and holds in all known models thereof, suffices to 
  imply $\SCH$ above $\kappa$. The second of these questions asks whether the strong tree property 
  can consistenty hold simultaneously at all regular cardinals $\kappa \geq \omega_2$. We show 
  here that the analogous question about the generalized narrow system property has a positive 
  answer. We also highlight some connections between generalized narrow system properties and 
  the existence of certain strongly unbounded subadditive colorings.
\end{abstract}

\keywords{narrow systems, Singular Cardinals Hypothesis, Shelah's Strong Hypothesis, tree property, large cardinals, subadditive colorings}
\subjclass[2020]{03E05, 03E35, 03E55, 03E04}

\maketitle

\section{Introduction}

A significant line of research in modern combinatorial set theory concerns the study of compactness 
principles that hold at (and sometimes characterize) large cardinals, the extent to which these 
compactness principles can hold at smaller cardinals, and the extent to which these principles can 
be said to capture the ``essence" of the respective large cardinal. To take a classical 
example, the tree property characterizes weakly compact cardinals among strongly inaccessible 
cardinals, while Mitchell \cite{mitchell} showed that the tree property at $\aleph_2$ is equiconsistent 
with the existence of a weakly compact cardinal. A number of questions remain open, though, about 
the extent to which the tree property can hold at smaller cardinals. The most prominent, due to Magidor, 
asks whether it is consistent that the tree property holds simultaneously at all regular cardinals 
greater than or equal to $\aleph_2$.

Generalizations of the tree property, known collectively as \emph{two-cardinal tree properties}, were 
introduced in the 1970s by Jech \cite{jech_combinatorial_problems} and Magidor 
\cite{magidor_combinatorial_characterization} to provide combinatorial characterizations of strongly 
compact and supercompact cardinals. Let us now recall some of the important definitions, in their 
modern formulation.

\begin{definition} \label{tp_def}
  Suppose that $\kappa \leq \lambda$ are uncountable cardinals, with $\kappa$ regular. A 
  $(\kappa, \lambda)$-tree is a structure $\mc T = \langle T_x \mid x \in \power_\kappa \lambda \rangle$ 
  such that
  \begin{itemize}
    \item for all $x \in \power_\kappa \lambda$, $T_x$ is a nonempty collection of subsets of $x$;
    \item for all $x \subseteq y \in \power_\kappa \lambda$ and all $t \in T_y$, we have $t \cap x \in T_x$.
  \end{itemize}
  A $(\kappa, \lambda)$-tree $\mc T$ is \emph{thin} if $|T_x| < \kappa$ for all $x \in \power_\kappa \lambda$.
  A \emph{cofinal branch} through $\mc T$ is a set $b \subseteq \lambda$ such that $b \cap x \in T_x$ for all 
  $x \in \power_\kappa \lambda$.
  
  The \emph{$(\kappa, \lambda)$-tree property}, denoted $\TP(\kappa, \lambda)$, is the assertion that every 
  thin $(\kappa, \lambda)$-tree has a cofinal branch.
  The \emph{ineffable $(\kappa, \lambda)$-tree property}, denoted $\ITP(\kappa, \lambda)$, is the assertion 
  that, for every thin $(\kappa, \lambda)$-tree $\mc T = \langle T_x \mid x \in \power_\kappa \lambda \rangle$ 
  and every choice function $d \in \prod_{x \in \power_\kappa \lambda} T_x$, there is a set $b \subseteq 
  \lambda$ such that the set
  \[
    \{x \in \power_\kappa \lambda \mid b \cap x = d(x)\}
  \] 
  is stationary in $\power_\kappa \lambda$. 
  
  The \emph{strong tree property} at $\kappa$, denoted $\TP_\kappa$, is the assertion that 
  $\TP(\kappa, \lambda)$ holds for all $\lambda \geq \kappa$.\footnote{To head off potential confusion, we note that 
  $\TP_\kappa$ is stronger than the classical tree property at $\kappa$, which is typically denoted $\TP(\kappa)$ 
  and is equivalent to $\TP(\kappa, \kappa)$ in our notation.} The \emph{super tree property} at $\kappa$, 
  denoted $\ITP_\kappa$, is the assertion that $\ITP(\kappa, \lambda)$ holds for all $\lambda 
  \geq \kappa$.
\end{definition}

\begin{fact}
  Suppose that $\kappa$ is an inaccessible cardinal.
  \begin{itemize}
    \item (Jech \cite{jech_combinatorial_problems}) $\kappa$ is strongly compact if and only if 
    $\TP_\kappa$ holds.
    \item (Magidor \cite{magidor_combinatorial_characterization}) $\kappa$ is supercompact if and only if 
    $\ITP_\kappa$ holds.
  \end{itemize}
\end{fact}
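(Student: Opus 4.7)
The proof has four implications, naturally grouped as two equivalences, and the plan is to follow the classical arguments of Jech and Magidor: the forward directions (large cardinal implies tree property) are driven by ultrapower embeddings, while the reverse directions build a thin tree whose cofinal branches encode the desired ultrafilter.

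For the forward direction of the Jech equivalence, fix a thin $(\kappa, \lambda)$-tree $\mc T = \la T_x \mid x \in \power_\kappa \lambda \ra$ and invoke strong compactness to obtain a fine $\kappa$-complete ultrafilter $U$ on $\power_\kappa \lambda$. Form $j \colon V \to M = \Ult(V, U)$ and observe that fineness guarantees $j[\lambda] \in M$ with $|j[\lambda]|^M = \lambda < j(\kappa)$, so $j[\lambda] \in (\power_{j(\kappa)} j(\lambda))^M$. By elementarity, $j(\mc T)_{j[\lambda]}$ is nonempty; pick any $t$ in it and set $b = \{\alpha < \lambda : j(\alpha) \in t\}$. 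Then $j(b) \cap j[\lambda] = t$, and the coherence of $j(\mc T)$ combined with elementarity confirms that $b \cap x \in T_x$ for every $x \in \power_\kappa \lambda$, so $b$ is a cofinal branch.

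For the forward direction of the Magidor equivalence, the setup is identical but with $U$ a \emph{normal} fine $\kappa$-complete ultrafilter, which supercompactness provides. Given a choice function $d \in \prod_{x \in \power_\kappa \lambda} T_x$, normality yields $[\mathrm{id}]_U = j[\lambda]$, so $[d]_U = j(d)(j[\lambda])$ lies in $j(\mc T)_{j[\lambda]}$. Taking $t := j(d)(j[\lambda])$ in the previous construction, the identity $j(b) \cap j[\lambda] = t = j(d)(j[\lambda])$ shows that $\{x \in \power_\kappa \lambda : b \cap x = d(x)\} \in U$, and normality upgrades this to stationarity in $\power_\kappa \lambda$, yielding the ineffable branch.

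The main obstacle is the reverse direction: extracting a fine $\kappa$-complete ultrafilter (respectively a normal one) on $\power_\kappa \lambda$ from $\TP_\kappa$ (respectively $\ITP_\kappa$). The plan is to pick $\mu$ large enough to enumerate $\power(\power_\kappa \lambda)$ and to build a thin $(\kappa, \mu)$-tree in which the node set at $x$ consists of coherent partial assignments that decide, for each set in a fixed enumeration indexed by $x$, whether it or its complement should lie in the intended fine $\kappa$-complete ultrafilter; a cofinal branch then assembles these fragments into a full fine ultrafilter on $\power_\kappa \lambda$. The delicate step is thinness: the naive family of all consistent partial assignments at $x$ can exceed $\kappa$, so one restricts to assignments arising coherently from the fixed enumeration and uses the inaccessibility of $\kappa$ to force $|T_x| < \kappa$. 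For the ineffable version, one further chooses $d$ to record the intended filter values, and the stationarity clause of $\ITP(\kappa, \lambda)$ is exactly what upgrades the resulting branch into a \emph{normal} ultrafilter.
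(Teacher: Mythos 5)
The paper records this as a \textbf{Fact} with citations to Jech and Magidor and does not itself supply a proof, so there is nothing in the text to compare against line by line; the following comments therefore concern the internal correctness of your sketch.

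Your forward direction for the Jech equivalence contains a real error. You write that ``fineness guarantees $j[\lambda] \in M$ with $|j[\lambda]|^M = \lambda < j(\kappa)$.'' This is the signature of a $\lambda$-\emph{supercompactness} embedding, not a mere strong compactness embedding. If $U$ is a fine $\kappa$-complete ultrafilter on $\power_\kappa \lambda$ that is not normal, the ultrapower $M = \Ult(V,U)$ need not be closed under $\lambda$-sequences, and $j[\lambda]$ need not be an element of $M$ (indeed, if it always were, every strongly compact cardinal would be supercompact). What fineness does give you is that $j[\lambda] \subseteq [\mathrm{id}]_U$, that $[\mathrm{id}]_U \in M$, and that $[\mathrm{id}]_U \in (\power_{j(\kappa)} j(\lambda))^M$. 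The argument is salvageable: pick $t \in j(\mc T)_{[\mathrm{id}]_U}$, set $b := \{\alpha < \lambda \mid j(\alpha) \in t\}$, and for each $x \in \power_\kappa \lambda$ use $j(x) = j[x] \subseteq [\mathrm{id}]_U$ together with the coherence clause of the $(\kappa, \lambda)$-tree to conclude $j(b \cap x) = t \cap j(x) \in j(T_x)$, hence $b \cap x \in T_x$. But as written, with $j[\lambda]$ playing the role of the covering set, the proof does not go through for strongly compact $\kappa$. Your Magidor forward direction, by contrast, is fine, since there the ultrafilter is normal and $j[\lambda] = [\mathrm{id}]_U \in M$ really does hold.

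The reverse directions are sketches, and the diagnosis of where the difficulty lies is off. You identify thinness as ``the delicate step,'' but thinness is in fact the easy part: once each $T_x$ is a set of functions from $x$ into $2$ (equivalently, subsets of $x$), the inaccessibility of $\kappa$ gives $|T_x| \leq 2^{|x|} < \kappa$ automatically, with no restriction needed. The genuine delicacies are elsewhere: ensuring each $T_x$ is nonempty, ensuring the coherence clause of a $(\kappa,\mu)$-tree (for \emph{every} $t \in T_y$ and every $x \subseteq y$, $t \cap x \in T_x$), and, crucially, ensuring that a cofinal branch actually assembles into a $\kappa$-complete, \emph{fine} ultrafilter. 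The standard way to resolve all of these simultaneously is to parametrize nodes by elements $z \in \power_\kappa \lambda$ via $g_z(\xi) := [z \in A_\xi]$ relative to a fixed enumeration $\langle A_\xi \mid \xi < \mu \rangle$ of $\power(\power_\kappa\lambda)$, and to build fineness in by restricting at level $x$ to those $z$ covering the (fewer than $\kappa$ many) ordinals $\alpha$ whose corresponding $\hat\alpha = \{y \mid \alpha \in y\}$ appears as some $A_\xi$ with $\xi \in x$; this is a constraint imposed for fineness, not for thinness. Finally, the assertion that the stationarity clause of $\ITP(\kappa,\lambda)$ ``is exactly what upgrades the resulting branch into a normal ultrafilter'' is stated but not argued; extracting normality from the ineffable branch is a further step requiring a more careful tree construction and should not be waved through.
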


The modern study of two-cardinal tree properties at accessible cardinals did not begin until the 2000s, when 
the relevant definitions (including, e.g., the notion of a \emph{thin} $(\kappa, \lambda)$-tree 
introduced above) were isolated by Wei\ss\ \cite{weiss}. Since then, they have been the focus of a large 
amount of research, a considerable amount of which has been directed toward the study of their influence 
on cardinal arithmetic. Most notably, results of Viale \cite{viale_guessing_models} and 
Krueger \cite{krueger_sch} together show that, for a regular cardinal $\kappa \geq \omega_2$, 
$\ISP_\kappa$, which is a strengthening of $\ITP_\kappa$ also introduced by Wei\ss\ in \cite{weiss}, 
implies the Singular Cardinals Hypothesis ($\SCH$) above $\kappa$. In 
\cite[Theorem A]{arithmetic_paper}, the author and Stejskalov\'{a} show that $\SCH$ above $\kappa$ (and in fact 
Shelah's Strong Hypothesis ($\SSH$), a strengthening of $\SCH$, above $\kappa$) already follows 
from a significant weakening of $\ISP_\kappa$ that holds if, e.g., $\kappa$ is strongly compact or 
if $\kappa = \omega_2$ and we are in an extension by Mitchell forcing starting with a strongly 
compact cardinal.

We note also the seminal result of Solovay \cite{solovay_sch} stating that if $\kappa$ is a strongly 
compact cardinal, then $\SCH$ holds above $\kappa$. Recalling that, among inaccessible cardinals, 
$\TP_\kappa$ characterizes strongly compact cardinals whereas $\ITP_\kappa$ 
characterizes supercompact cardinals, this, together with the 
results mentioned in the previous paragraph, naturally leads to the following question, already 
asked in, e.g., \cite{hachtman_sinapova_itp} and \cite{ineffable_tree_property}:

\begin{question} \label{sch_q}
  Suppose that $\kappa \geq \omega_2$ is a regular cardinal. Does $\ITP_\kappa$ (or $\TP_\kappa$) imply 
  $\SCH$ above $\kappa$?
\end{question}

The analogue of Magidor's question is also of interest for these two-cardinal tree properties (see 
\cite{ineffable_tree_property} for more discussion of this question):

\begin{question} \label{global_q}
  Is it consistent that $\ITP_\kappa$ (or $\TP_\kappa$) holds for all regular cardinals $\kappa \geq \omega_2$?
\end{question}

Questions \ref{sch_q} and \ref{global_q} have a tight connection: by a theorem of Specker \cite{specker}, 
if $\mu$ is a cardinal and $2^\mu = \mu^+$, then the tree property fails at $\mu^{++}$ and therefore, 
\emph{a fortiori}, $\TP_{\mu^{++}}$ fails. Thus, if $\mu$ is a singular strong limit cardinal and 
$\SCH$ holds at $\mu$, then $\TP_{\mu^{++}}$ fails, so a positive answer to Question \ref{sch_q} 
would entail a negative answer to Question \ref{global_q}.

Motivated by these question, we prove here some results that we feel hint at a positive answer to Question 
\ref{sch_q} or at least 
indicate that genuinely new ideas would be needed to establish a negative answer. This work will involve 
introducing and analyzing generalizations of the classical notion of a \emph{narrow $\kappa$-system}, 
introduced by Magidor and Shelah \cite{magidor_shelah} to facilitate study of the tree property, particularly 
at successors of singular cardinals. Narrow systems have continued to play a central role in the 
study of the tree property, with verifications of the tree property at a cardinal $\kappa$ (especially 
when $\kappa$ is the successor of a singular cardinal) typically at least implicitly consisting of 
the following two-step process:
\begin{enumerate}
  \item Prove that every $\kappa$-tree has a narrow $\kappa$-subsystem.
  \item Prove that every narrow $\kappa$-system has a cofinal branch. The cofinal branch through the 
  subsystem identified in step (1) then generates a branch through the given $\kappa$-tree.
\end{enumerate}

In this paper, we generalize the notion of narrow system from the setting of cardinals $\kappa$ to 
arbitrary directed partial orders $\Lambda$ and show that these generalized system can play the same 
role in the study of generalized tree properties that narrow $\kappa$-systems play in the study 
of the classical tree property at $\kappa$. We introduce the generalized narrow system properties 
$\mathsf{NSP}(\Lambda)$, asserting that every narrow $\Lambda$-system has a cofinal branch, and study these 
properties, particularly in relation to their connections to Questions \ref{sch_q} and \ref{global_q}.

In Section \ref{concrete_sec}, before introducing narrow systems in their full generality, we define 
a specific type of system, which we call a \emph{concrete $\power_\kappa \lambda$-system}, 
that is particularly relevant to the study of $\TP(\kappa, \lambda)$. The narrow system property 
$\cNSP(\power_\kappa \lambda)$ then asserts that every narrow concrete $\power_\kappa \lambda$-system 
has a cofinal branch, and $\cNSP_\kappa$ denotes the assertion that $\cNSP(\power_\kappa \lambda)$ 
holds for all $\lambda \geq \kappa$.
$\cNSP_\kappa$ holds in all known models of $\TP_\kappa$, 
and is often used, at least implicitly, in verifications that $\TP_\kappa$ holds in a given model, 
especially if $\kappa$ is the successor of a singular cardinal. At the same time, we show that this narrow 
system property is strong enough to imply instances of $\SSH$:

\begin{thma}
  Suppose that $\kappa \geq \omega_2$ is a regular cardinal and $\cNSP_\kappa$ holds. 
  Then $\SSH$ holds above $\kappa$.
\end{thma}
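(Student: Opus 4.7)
I would prove the contrapositive: assuming $\SSH$ fails above $\kappa$, I would construct some $\lambda \geq \kappa$ and a narrow concrete $\power_\kappa \lambda$-system with no cofinal branch, witnessing the failure of $\cNSP_\kappa$. By hypothesis there is a singular $\mu > \kappa$ with $\pp(\mu) \geq \mu^{++}$. A standard pcf-theoretic reflection then lets me further assume $\theta := \cf(\mu) < \kappa$, and fix both an increasing sequence $\langle \mu_i : i < \theta \rangle$ of regular cardinals cofinal in $\mu$ with $\mu_i > \kappa$ for all $i$, and a scale $\vec f = \langle f_\alpha : \alpha < \mu^{++} \rangle$ in $\prod_{i<\theta} \mu_i$ that is strictly increasing and cofinal modulo the bounded ideal $J^{\mathrm{bd}}_\theta$.

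\textbf{Setting $\lambda := \mu^{++}$, I would build the system as follows.} For each $x \in \power_\kappa \lambda$, let $g_x \in \prod_{i<\theta} \mu_i$ be defined (for the tail of $i$'s with $\kappa < \mu_i$) by $g_x(i) := \sup\{f_\alpha(i) : \alpha \in x\}$, which is well-defined because $|x| < \kappa \leq \mu_i$. The nodes of the system at level $x$ should record coherent ``error patterns'' describing, for each $\alpha \in x$, how $f_\alpha$ sits relative to $g_x$ and relative to a fixed family of threshold functions pre-chosen from $\vec f$. Narrowness should follow because, as $\theta < \kappa$, the relevant configurations at each level are parameterised by functions in $\prod_{i<\theta} \mu_i$ of suitably bounded complexity, giving a fiber of size below $\kappa$. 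The compatibility required by concreteness --- that $t \cap x \in T_x$ whenever $x \subseteq y$ and $t \in T_y$ --- should follow from the monotonicity of both the sup-function and the scale order.

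\textbf{Ruling out cofinal branches.} If $b \subseteq \lambda$ were such a branch, the choices it prescribes at every $x \in \power_\kappa \lambda$ should assemble coherently into a global object: either an exact upper bound of $\vec f$ of the wrong cofinality, or a single function in $\prod_{i<\theta} \mu_i$ that dominates every $f_\alpha$ modulo $J^{\mathrm{bd}}_\theta$. Both possibilities contradict basic properties of the scale --- respectively Shelah's no-hole / trichotomy theorems, or the cofinality of $\vec f$ in $\prod_i \mu_i / J^{\mathrm{bd}}_\theta$ --- so no cofinal branch exists and $\cNSP_\kappa$ fails.

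\textbf{Main obstacle.} The heart of the argument is the second step: the nodes must be rich enough that any cofinal branch forces a genuine pcf obstruction, yet combinatorially sparse enough that the system remains narrow, and they must additionally fit the rigid $t \cap x \in T_x$ coherence demanded by concrete $\power_\kappa \lambda$-systems. Balancing these three constraints --- richness, narrowness, and concrete coherence --- is where the real work lies, and the approach should parallel and extend the techniques used in the author's joint work with Stejskalov\'{a} establishing the analogous implication from a weakening of $\ISP_\kappa$.
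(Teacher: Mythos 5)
Your high-level strategy — appeal to $\cNSP_\kappa$ at some $\power_\kappa\lambda$ to constrain pcf at $\mu$ — is in the right spirit, but the route you propose is genuinely different from the paper's and, as written, has a serious gap precisely where you identify "the real work." The paper does \emph{not} go through scales at $\mu^{++}$. It reduces (via Fact~\ref{pp_fact}) to showing $\cf(\prod\vec\mu,<^*)=\mu^+$ for countable-cofinality $\mu>\kappa$, fixes a uniform transitive $\omega$-covering matrix $\mc D$ for $\mu^+$ with $|D(i,\beta)|<\mu_i$, and then (Theorem~\ref{nsp_cp_thm}) builds a narrow concrete $\power_\kappa\mu^+$-system \emph{directly from $\mc D$}: the key is the downward-coherence ordinal $\gamma_x$ from Theorem~\ref{downward_coherence_thm}, and the levels are simply $S_x=\{x\cap D(i,\gamma_x):\dots\}$, giving width $\leq\theta$ essentially for free. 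A cofinal branch $b$ then witnesses $\CP(\mc D)$, which is plugged into the pcf result Theorem~\ref{cp_ssh_thm}. This is a positive, constructive argument at $\mu^+$, not a contrapositive at $\mu^{++}$, and the system is built from a covering matrix rather than a scale.

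Your proposal does not give a construction that one could check. The crucial middle paragraph — what exactly the nodes are, why each $S_x$ has size $<\kappa$ after its successor, and why the coherence condition holds — is entirely deferred. Two concrete problems: (i) you attribute to concrete systems the coherence requirement "$t\cap x\in T_x$ for all $t\in T_y$," but that is the thin-tree condition; Definition~\ref{concrete_system_def}(3) only asks for the \emph{existence} of such a $t$, and the paper's Remark~\ref{remark_23} explicitly flags this distinction. Conflating the two makes your target appear harder than it is, and suggests the design constraints you were balancing were not the right ones. (ii) Parametrising nodes "by functions in $\prod_{i<\theta}\mu_i$ of suitably bounded complexity" is exactly the point at which narrowness is in danger: naively there are $\mu$-many such functions, and nothing in the sketch isolates a $<\kappa$-sized family that is both coherent and rich enough that a branch yields an exact upper bound or a dominating function. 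Absent those details, the "no cofinal branch" claim is unsupported, and it is not clear that the scale-based construction can be made to satisfy all three of your stated constraints simultaneously. The paper's covering-matrix construction is precisely engineered to avoid this tension: narrowness is immediate (width $\leq\theta=\cf(\mu)$), and coherence falls out of transitivity of $\mc D$ together with the choice of $\gamma_x$.
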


Then, in Section \ref{system_sec}, we introduce the notion of narrow $\Lambda$-system and the 
narrow $\Lambda$-system property ($\mathsf{NSP}(\Lambda)$) for an arbitrary directed partial order 
$\Lambda$ and, as an illustration of their utility, 
use them to prove that generalized tree properties hold at successors of 
singular limits of strongly compact cardinals (Theorem \ref{strongly_compact_tp_thm}). 

In Section \ref{subbadditive_sec}, we connect narrow $\Lambda$-systems with 
strongly unbounded subadditive colorings, proving both that instances of $\NSP(\Lambda)$ entail 
the nonexistence of such functions on $\Lambda^{[2]}$ and, in turn, that the nonexistence of such 
functions on $(\power_\kappa \lambda)^{[2]}$ can be used in place of $\cNSP_\kappa$ 
in the hypothesis of Theorem A (cf.\ Corollaries \ref{nsp_sub_cor} and \ref{sub_ssh_cor}, 
respectively).

The remainder of the paper is devoted to a global consistency result showing that Question \ref{global_q} 
has a positive answer if the two-cardinal tree properties are replaced by generalized narrow system 
properties:
\begin{thmb}
  Suppose that there is a proper class of supercompact cardinals. Then there is a (class) forcing extension 
  in which $\NSP(\Lambda)$ holds for every directed partial order $\Lambda$.
\end{thmb}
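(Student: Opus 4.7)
The plan is to construct the desired class-generic extension via a reverse Easton iteration $\mathbb{P}$ of length $\Ord$ along the proper class of supercompacts $\la \kappa_\alpha : \alpha \in \Ord \ra$. Before iterating, I would apply a Laver preparation so that each $\kappa_\alpha$ becomes indestructibly supercompact under $\kappa_\alpha$-directed-closed forcing. The nontrivial stages $\dot{\mathbb{Q}}_\alpha$ of $\mathbb{P}$ then occur at (the images of) the $\kappa_\alpha$'s, and are chosen to be $\kappa_\alpha$-directed-closed and $\kappa_{\alpha+1}$-cc---e.g.\ a Mitchell-style forcing, or $\Coll(\kappa_\alpha^+, {<}\kappa_{\alpha+1})$---thereby shaping the cardinal structure of the final model so that each former supercompact becomes accessible while its supercompactness reflection properties get transferred to the successor cardinals below. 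Standard factorization and Easton-support arguments would show that $\mathbb{P}$ preserves $\ZFC$ and that each $\kappa_\alpha$ remains supercompact in the intermediate extension $V^{\mathbb{P}_{\kappa_\alpha}}$.

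To prove $\NSP(\Lambda)$ in $V^{\mathbb{P}}$ for an arbitrary directed partial order $\Lambda$ and a narrow $\Lambda$-system $\mc S$, I would pick an indestructibly supercompact $\kappa = \kappa_\alpha$ with $\kappa > |\Lambda|, \width(\mc S)$ and $\Lambda, \mc S \in V^{\mathbb{P}_\kappa}$. The tail forcing $\mathbb{P}_{[\kappa, \infty)}$ is $\kappa$-directed-closed, so, by a standard closure argument, it cannot add cofinal branches to narrow $\Lambda$-systems of width strictly less than $\kappa$; it therefore suffices to locate a branch through $\mc S$ in $V^{\mathbb{P}_\kappa}$. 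Inside this model $\kappa$ is supercompact, and a generic elementary embedding $j : V^{\mathbb{P}_\kappa} \to M$ with $\crit(j) = \kappa$ witnessing enough supercompactness relative to $|\Lambda|$ carries $\mc S$ to a larger system $j(\mc S)$; a node of $j(\mc S)$ indexed above $j " \Lambda$ then pulls back through $j$ to a cofinal branch in $\mc S$, giving $\NSP(\Lambda)$ in the intermediate and hence in the final extension.

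The main obstacle I expect is making the pullback step go through for an \emph{arbitrary} directed partial order $\Lambda$: one must identify, inside the general definition of narrow $\Lambda$-system from Section \ref{system_sec}, the $\Lambda$-analogue of the ``top-level node'' that drives the classical reflection argument for narrow $\kappa$-systems, and verify that coherence of the pullback along $j$ follows automatically from the narrowness hypothesis. The general framework of Section \ref{system_sec} is presumably designed to support exactly this abstraction, and once the correct pullback lemma is in place, the argument above should apply uniformly to every $\Lambda$. A secondary, more bookkeeping-style issue is arranging the Laver preparation coherently with the ambient class iteration so that the indestructibility of each $\kappa_\alpha$ is genuinely preserved under its full tail $\mathbb{P}_{[\kappa_\alpha, \infty)}$; this is by now a standard technique in global consistency proofs of this kind.
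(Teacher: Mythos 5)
Your reflection step has the cardinal inequality reversed, and this is fatal. You choose $\kappa = \kappa_\alpha > |\Lambda|$ with $\crit(j) = \kappa$; but then (coding $\Lambda$ by ordinals below $\kappa$) the embedding $j$ fixes $\Lambda$ and $\mc S$ up to isomorphism, so $j(\Lambda) = j``\Lambda$, and since $\Lambda$ has no maximal element there is no ``node of $j(\mc S)$ indexed above $j``\Lambda$'' to pull back. The reflection argument needs the \emph{opposite} configuration: a supercompact $\kappa$ with $\width(\mc S)^+ < \kappa \le d_\Lambda \le |\Lambda|$, so that $j(\Lambda)$ is $j(\kappa)$-directed in $M$ and one can pick $v^* \in j(\Lambda)$ above all of $j``\Lambda$. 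It is the \emph{degree} of supercompactness $\delta$ (not the critical point) that must exceed $|\Lambda|$, so that $j``\Lambda \in M$. The narrowness hypothesis $\width(\mc S)^+ < d_\Lambda$ is exactly what makes it possible to place a supercompact in the interval $(\width(\mc S)^+, d_\Lambda]$; see Theorem \ref{single_collapse_theorem}.

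Once the inequality is corrected, a second gap opens that your outline does not address. Because the relevant $\kappa$ now satisfies $\kappa \le d_\Lambda$ and may be small (say $\aleph_5$ in the final model), while $\Lambda$ and $\mc S$ may only appear at a far later stage $\zeta \gg \kappa$ of the iteration, you cannot in general arrange $\mc S \in V^{\P_\kappa}$. Your remark that the tail ``cannot add cofinal branches'' does not repair this: the task is to \emph{find} a branch, not to pull one down from $V^{\P}$. The paper's resolution is Theorem \ref{single_collapse_theorem}: after the single collapse $\Coll(\mu, {<}\kappa)$, $\SNSP_\kappa$ holds \emph{and is indestructible under any further $\kappa$-directed closed set forcing} $\Q$, which absorbs the tail of the iteration up to any stage where $\mc S$ appears. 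Proving this indestructibility is the technical heart of the argument and goes beyond a routine master-condition lift: since $\kappa$ is collapsed to $\mu^+$ it is no longer supercompact in $V^{\Coll(\mu,<\kappa)\ast\dot\Q}$, so $j$ must be lifted through the collapse, through $\Q$, and through the quotient, producing not a single branch but a \emph{full set of branches} in a still larger extension; this full set is then reflected back into $V^{\Coll(\mu,<\kappa)\ast\dot\Q}$ by the branch-preservation Lemma \ref{preservation_lemma}, and Proposition \ref{full_cofinal_prop} extracts a cofinal branch. That preservation lemma for full sets of branches through narrow $\Lambda$-systems is precisely the ``pullback lemma'' you anticipated would be needed, but it is not a formality and its statement is about full sets, not single branches. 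Finally, note that no Laver preparation is used or needed in the paper's argument, and one can work with plain L\'evy collapses rather than Mitchell forcing, since one is aiming for $\NSP$ rather than $\TP$ (cf.\ Remark \ref{cnsp_tp_remark}).
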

Section \ref{preservation_sec} contains the proof of a technical branch preservation lemma for 
generalized narrow systems, and then Section \ref{consistency_sec} applies this lemma to prove  
Theorem B.

\subsection{Notational conventions} Unless otherwise noted, we follow standard set theoretic notational conventions. $\mathrm{On}$ denotes the class of all ordinals.
Given an infinite cardinal $\kappa$ and a set $X$, $\power(X)$ denotes the power set of $X$, and $\power_\kappa X$ 
denotes $\{x \subseteq X \mid |x| < \kappa\}$. If $x$ is a set of ordinals, then the \emph{strong supremum} of $x$ is 
the ordinal $\ssup(x) := \sup\{\alpha + 1 \mid \alpha \in x\}$, i.e., $\ssup(x)$ is the least ordinal $\beta$ such that 
$\alpha < \beta$ for all $\alpha \in x$. Given a partial order $\Lambda$, we let $\Lambda^{[2]}$ denote the 
set of ordered pairs $(u,v)$ from $\Lambda$ such that $u <_\Lambda v$. Sets of the form 
$\power_\kappa X$ will be interpreted as partial orders with the order relation given by $\subsetneq$. 
In particular, $(\power_\kappa X)^{[2]}$ denotes the set of pairs $(x,y)$ of elements of 
$\power_\kappa X$ with $x \subsetneq y$.

\section{Concrete systems} \label{concrete_sec}

Before we introduce the general notion of \emph{(narrow) $\Lambda$-system} for an arbitrary directed 
order $\Lambda$, and in order to help motivate the more abstract general definition, 
we first consider an important special case. 

\begin{definition} \label{concrete_system_def}
  Suppose that $\kappa \leq \lambda$ are uncountable cardinals, with $\kappa$ regular. A \emph{concrete $\power_\kappa 
  \lambda$-system} is a structure $\mc S = \langle S_x \mid x \in A \rangle$ such that
  \begin{enumerate}
    \item $A$ is a $\subseteq$-cofinal subset of $\power_\kappa \lambda$;
    \item for all $x \in A$, $\emptyset \neq S_x \subseteq \power(x)$;
    \item for all $x \subseteq y$, both in $A$, there is $t \in S_y$ such that 
    $t \cap x \in S_x$.
  \end{enumerate}
  The \emph{width} of $\mc S$ is defined to be $\width(\mc S) := \sup\{|S_x| \mid x \in A\}$. We say that 
  $\mc S$ is a \emph{narrow} concrete $\power_\kappa \lambda$-system if $\width(\mc S)^+ < \kappa$.
  A \emph{cofinal branch} through $\mc S$ is a set $b \subseteq \lambda$ such that the set
  $\{x \in A \mid b \cap x \in S_x\}$ is $\subseteq$-cofinal in $\power_\kappa \lambda$.
\end{definition}

Classical narrow systems, with levels indexed by ordinals, were introduced by Magidor and Shelah 
in \cite{magidor_shelah} as a central tool in the study of the tree property, particularly at successors of 
singular cardinals. Indeed, all known verifications of the tree property at the successor of a singular 
cardinal $\mu$ at least implicitly go through the following two steps:
\begin{enumerate}
  \item Show that that every $\mu^+$-tree $\mc T$ has a narrow subsystem $\mc S$ of height $\mu^+$.
  \item Show that every narrow system of height $\mu^+$ has a cofinal branch; in particular, 
  $\mc S$ has a cofinal branch, which gives rise to a cofinal branch through $\mc T$.
\end{enumerate}

One of the motivating observations for this paper is that narrow concrete $\power_\kappa \lambda$-systems 
play an analogous role for $(\kappa, \lambda)$-trees. For example, by an analogue of the 
two-step argument outlined above, we can show that the two-cardinal tree property 
$\TP_\kappa$ holds if $\kappa$ is the successor of a singular limit of strongly 
compact cardinals. Since a more general version of this statement is true, we 
postpone its proof until after we introduce the more general definition of ``narrow 
system"; it follows as a special case of Theorem \ref{strongly_compact_tp_thm} 
below.

We now turn to showing that the existence of cofinal branches through certain narrow 
concrete systems implies instances of $\SCH$ (and $\SSH$). To state the 
results concisely, we introduce the following terminology.

\begin{definition} \label{concrete_prop_def}
  Let $\kappa$ be a regular uncountable cardinal. For a cardinal $\lambda \geq \kappa$, we say 
  that the \emph{concrete narrow $\power_\kappa \lambda$-system property} holds 
  (denoted $\cNSP(\power_\kappa \lambda)$) if every narrow concrete
  $\power_\kappa \lambda$-system has a cofinal branch. We say that $\cNSP_\kappa$ holds 
  if $\cNSP(\power_\kappa \lambda)$ holds for all $\lambda \geq \kappa$.
\end{definition}

\begin{remark} \label{remark_23}
  It is worth taking the time to compare Definitions \ref{concrete_system_def} and Definition 
  \ref{concrete_prop_def} with Definition \ref{tp_def}, as the definitions of narrow concrete 
  $\power_\kappa \lambda$-systems and thin $(\kappa, \lambda)$-trees are quite similar. 
  The two salient differences are:
  \begin{itemize}
    \item The definition of narrow concrete $\power_\kappa \lambda$-system is more restrictive 
    with regards to the size of each level, requiring $\width(\mc S)^+ < \kappa$, whereas 
    a thin $(\kappa, \lambda)$-tree $\mc T$ is only required to satisfy $|T_x| < \kappa$ for all 
    $x \in \power_\kappa \lambda$.
    \item On the other hand, the definition of thin $(\kappa, \lambda)$-tree is more restrictive 
    with regards to the \emph{coherence properties} of the structure, requiring that, for all 
    $x \subseteq y$ and \emph{all} $t \in T_y$, we have $t \cap x \in T_x$, whereas the analogous 
    requirement in the definition of narrow concrete $\power_\kappa \lambda$-system only 
    requires the existence of one such $t$.
  \end{itemize}
  Therefore, it is not immediately evident whether either $\TP_\kappa$ or $\cNSP_\kappa$ 
  implies the other, though we shall see that, in practice, $\cNSP_\kappa$ 
  is easier to arrange than, and does not imply, $\TP_\kappa$ (cf.\ Remark 
  \ref{cnsp_tp_remark} below). The question of whether
  $\TP_\kappa$ implies $\cNSP_\kappa$ remains open and very much 
  of interest.
\end{remark}

Our verifications of $\SSH$ will go through the machinery of \emph{covering 
matrices} introduced by Viale in his proof that $\SCH$ follows from the 
Proper Forcing Axiom \cite{viale_pfa_sch}.

\begin{definition}
	Let $\theta < \lambda$ be regular cardinals. A \emph{$\theta$-covering matrix
		for $\lambda$} is a matrix $\mathcal{D} = \langle D(i, \beta) \mid i <
	\theta, ~ \beta < \lambda \rangle$ such that:
	\begin{enumerate}
		\item for all $\beta < \lambda$, $\langle D(i, \beta) \mid i < \theta \rangle$
		is a $\subseteq$-increasing sequence and $\bigcup_{i < \theta} D(i, \beta) =
		\beta$;
		\item for all $\beta < \gamma < \lambda$ and $i < \theta$, there is $j <
		\theta$ such that $D(i, \beta) \subseteq D(j, \gamma)$.
	\end{enumerate}
\end{definition}

We will be especially interested in covering matrices satisfying certain additional properties.

\begin{definition}
	Suppose that $\theta < \lambda$ are regular cardinals and $\mc D$ is a
	$\theta$-covering matrix for $\lambda$.
	\begin{enumerate}
		\item $\mc D$ is \emph{transitive} if, for all $\beta < \gamma < \lambda$
		and all $i < \theta$, if $\beta \in D(i, \gamma)$, then
		$D(i, \beta) \subseteq D(i, \gamma)$.
		\item $\mc D$ is \emph{uniform} if, for every limit ordinal $\beta < \lambda$,
		there is $i < \theta$ such that $D(i, \beta)$ contains a club in $\beta$.
		\item $\CP(\mc D)$ holds if there is an unbounded $A \subseteq \lambda$
		such that $[A]^\theta$ is covered by $\mc D$, i.e., for all
		$X \in [A]^\theta$, there are $\beta < \lambda$ and $i < \theta$ for which
		$X \subseteq D(i, \beta)$.
	\end{enumerate}
\end{definition}

The following theorem is proven in \cite{arithmetic_paper} (it was previously known 
in the case in which $\mu$ is strong limit (cf.\ \cite[Lemma 6]{viale_covering})).

\begin{theorem}[\cite{arithmetic_paper}] \label{downward_coherence_thm}
  Suppose that $\mu$ is a singular cardinal, $\theta = \cf(\mu)$, and 
  $\mc D$ is a uniform, transitive $\theta$-covering matrix for $\mu^+$. 
  Then, for every $x \in \power_\mu \mu^+$, there is $\gamma_x < \mu^+$ such that, 
  for all $\beta \in [\gamma_x, \mu^+)$, there is $i < \theta$ such that, for all 
  $j \in [i, \theta)$, we have $x \cap D(j,\beta) = x \cap D(j, \gamma_x)$.
\end{theorem}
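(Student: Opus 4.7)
The plan is to reformulate the conclusion in terms of ``level functions'' $h_\beta : x \to \theta$ and then locate $\gamma_x$ where these functions stabilize on their upper tails. First I would set $\eta_x := \sup(x) + 1 < \mu^+$, and for each $\beta \in [\eta_x, \mu^+)$ define
\[
  h_\beta(\alpha) := \min\{i < \theta \mid \alpha \in D(i,\beta)\},
\]
which is well-defined since $x \subseteq \beta = \bigcup_{i < \theta} D(i,\beta)$. Because $x \cap D(j,\beta) = \{\alpha \in x \mid h_\beta(\alpha) \leq j\}$, the desired conclusion rephrases as: find $\gamma_x$ such that, for every $\beta \in [\gamma_x, \mu^+)$, there exists $i < \theta$ with $h_\beta(\alpha) = h_{\gamma_x}(\alpha)$ whenever $\max(h_\beta(\alpha), h_{\gamma_x}(\alpha)) \geq i$.

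Next I would harvest two facts from the hypotheses. From transitivity: for $\eta_x \leq \gamma < \beta$, setting $i_0(\gamma,\beta) := \min\{i : \gamma \in D(i,\beta)\}$, transitivity gives $D(i,\gamma) \subseteq D(i,\beta)$ for all $i \geq i_0(\gamma,\beta)$, which translates to $h_\beta(\alpha) \leq h_\gamma(\alpha)$ for every $\alpha \in x$ with $h_\gamma(\alpha) \geq i_0(\gamma,\beta)$---the ``high part'' of $h_\gamma$ pointwise dominates $h_\beta$. From uniformity: for every limit $\beta$ of cofinality $\theta$ there exist $i^*_\beta < \theta$ and a club $C_\beta \subseteq \beta$ with $C_\beta \subseteq D(i^*_\beta, \beta)$, which uniformly bounds $i_0(\gamma,\beta) \leq i^*_\beta$ for $\gamma \in C_\beta$.

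The core step is the choice of $\gamma_x$. For each $\alpha \in x$, let
\[
  v_\alpha := \min\{v < \theta \mid \{\beta \in [\eta_x, \mu^+) : h_\beta(\alpha) = v\} \text{ is cofinal in } \mu^+\};
\]
this exists by regularity of $\mu^+$ together with $\theta < \mu^+$, and the set $\{\beta : h_\beta(\alpha) < v_\alpha\}$ is then bounded. Since $|x| < \mu < \mu^+$ and $\mu^+$ is regular, I can fix a single $\gamma_0 < \mu^+$ past which $h_\beta(\alpha) \geq v_\alpha$ holds for \emph{every} $\alpha \in x$ simultaneously. I would then build $\gamma_x \geq \gamma_0$ of cofinality $\theta$, together with a threshold $i^* < \theta$, such that $h_{\gamma_x}(\alpha) = v_\alpha$ for every $\alpha \in x$ with $h_{\gamma_x}(\alpha) \geq i^*$. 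Concretely, $\gamma_x$ would be obtained as the supremum of a $\theta$-sequence of witnesses realizing the minima $v_\alpha$ for increasingly large subsets of $x$, with the uniformity-provided club inside a low-index cell at $\gamma_x$ ensuring that the $\gamma_x$-column inherits these minima on its high part.

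Given such $\gamma_x$ and $i^*$, I would finish as follows. For $\beta \geq \gamma_x$, set $i := \max(i^*, i_0(\gamma_x,\beta)) + 1$. For $\alpha \in x$ with $h_{\gamma_x}(\alpha) \geq i$, the construction forces $h_{\gamma_x}(\alpha) = v_\alpha$, transitivity forces $h_\beta(\alpha) \leq h_{\gamma_x}(\alpha) = v_\alpha$, and $\beta \geq \gamma_0$ forces $h_\beta(\alpha) \geq v_\alpha$, hence $h_\beta(\alpha) = h_{\gamma_x}(\alpha)$. For $\alpha \in x$ with $h_{\gamma_x}(\alpha) < i$, transitivity at level $i-1 \geq i_0(\gamma_x,\beta)$ gives $\alpha \in D(i-1,\gamma_x) \subseteq D(i-1,\beta)$ (or chases through $i_0$ if $h_{\gamma_x}(\alpha) < i_0$), so $h_\beta(\alpha) \leq i - 1 < i$; then $\max(h_\beta(\alpha), h_{\gamma_x}(\alpha)) < i$ and the required equality is vacuous. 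The principal obstacle is the construction of $\gamma_x$ in the previous paragraph: arranging $h_{\gamma_x}$ to realize the minima $v_\alpha$ on its entire high part, rather than overshooting some of them, is delicate, and it is precisely at this point where the uniformity of $\mc D$ is essential---without uniformity, one can control $h_\beta(\alpha)$ only above the $\beta$-dependent threshold $i_0(\gamma_x,\beta)$, and $h_{\gamma_x}$ could land on a non-minimal value in its high part, breaking the final equality argument.
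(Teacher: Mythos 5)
This statement is only cited in the paper (it is Theorem 2.7, attributed to \cite{arithmetic_paper}), so there is no in-paper proof to compare against; I will assess your argument on its own terms.

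Your translation into level functions $h_\beta(\alpha) := \min\{i : \alpha \in D(i,\beta)\}$ is sound, the extraction from transitivity (namely $h_\beta(\alpha) \leq \max(h_\gamma(\alpha), i_0(\gamma,\beta))$ for $\gamma < \beta$) is correct, and the choice of $v_\alpha$ and of the bound $\gamma_0$ past which $h_\beta(\alpha) \geq v_\alpha$ for all $\alpha \in x$ is fine. The final verification is also correct \emph{provided} you really do have $\gamma_x$ and $i^*$ with the stated property. The problem is that the construction of $\gamma_x$ --- which you yourself flag as the delicate step --- does not go through as sketched, and the obstruction is not merely a matter of bookkeeping.

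There are two concrete issues. First, $|x|$ can be vastly larger than $\theta = \cf(\mu)$, so a $\theta$-length approach sequence $\langle \gamma_\xi \mid \xi < \theta \rangle$ cannot address the coordinates $\alpha \in x$ one at a time; at some stage you would need a single $\gamma_\xi$ realizing $h_{\gamma_\xi}(\alpha) = v_\alpha$ simultaneously for $|x|$-many $\alpha$, i.e.\ an element of $\bigcap_{\alpha} E_\alpha$ where $E_\alpha := \{\beta : h_\beta(\alpha) = v_\alpha\}$. Each $E_\alpha$ is merely unbounded, not club, and there is no reason for a large intersection of them to be unbounded (or even nonempty above $\gamma_0$). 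Second, even setting this aside, the way you invoke the uniformity club at $\gamma_x$ does not close the loop. If $\langle \gamma_\xi \rangle$ is continuous, then $\{\gamma_\xi\}$ is a club in $\gamma_x$ and therefore meets the uniformity club $C \subseteq D(i^*,\gamma_x)$ in a club of indices; but those indices are all limit ordinals, and at limit stages you have no control over $h_{\gamma_\xi}$ --- the values $v_\alpha$ were only ``installed'' at successor stages. If instead $\langle \gamma_\xi \rangle$ is not continuous, then $\{\gamma_\xi\}$ need not meet $C$ at all, since an unbounded set and a club in $\gamma_x$ can be disjoint. Trying to push the control down one level (use uniformity at the limit $\gamma_\xi$ to compare with $\gamma_{\xi'}$ for $\xi' < \xi$ in a sub-club) reproduces exactly the same difficulty with $\gamma_\xi$ in place of $\gamma_x$, and moreover the threshold $i^*_\xi$ coming from uniformity at $\gamma_\xi$ varies with $\xi$ and is not bounded a priori. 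So the claimed conclusion ``the $\gamma_x$-column inherits these minima on its high part'' is precisely what needs to be proved, and the proposed mechanism does not establish it. The argument as written therefore has a genuine gap at its central step, and a different construction of $\gamma_x$ (not by naively realizing the pointwise liminfs along a $\theta$-sequence) is needed.
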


We will also need to recall some basic background about Shelah's Strong Hypothesis. 
$\SSH$ is the assertion that $\pp(\mu) = \mu^+$ for every singular cardinal $\mu$, where 
$\pp(\mu)$ denotes the \emph{pseudopower} of $\mu$. For a cardinal $\kappa$, we say that 
$\SSH$ holds \emph{above $\kappa$} if $\pp(\mu) = \mu^+$ for every singular cardinal $\mu > \kappa$. 
For our purposes, we will not need to recall the definition of $\pp(\mu)$; the following facts will 
suffice:

\begin{fact} \label{pp_fact}
In what follows, if $\vec{\mu} = \langle \mu_i \mid i < \theta \rangle$ is a sequence of regular cardinals, 
then $\prod \vec{\mu}$ denotes the set of functions $f$ such that $\dom{f} = \theta$ and $f(i) < \mu_i$ 
for all $i < \theta$. Given $f,g \in \prod{\vec{\mu}}$, we say that $f <^* g$ if there is $i < \theta$ such 
that $f(j) < g(j)$ for all $j \in [i, \theta)$. The second and third facts below are both implicit in \cite{cardinal_arithmetic}; the cited references provide more explicit explanations.
\begin{enumerate}
  \item \cite[\S 2, Claim 2.4]{cardinal_arithmetic} If $\mu$ is a singular cardinal of uncountable cofinality and 
  $\{\nu < \mu \mid \pp(\nu) = \nu^+\}$ is stationary in $\mu$, then $\pp(\mu) = \mu^+$.
  \item \cite[Observation 4.4]{matet_meeting_numbers}
  Suppose that $\mu$ is a singular cardinal and $\pp(\mu) > \mu^+$. Then there is an increasing 
  sequence of regular cardinals $\vec{\mu} = \langle \mu_i \mid i < \cf(\mu) \rangle$ converging to 
  $\mu$ such that $\cf(\prod \vec{\mu}, <^*) > \mu^+$.
  \item \cite[Proposition 4.18]{arithmetic_paper} Let $\kappa$ be an infinite cardinal such that 
  $\SSH$ holds above $\kappa$. Then $\SCH$ holds above $\kappa$.
\end{enumerate}
\end{fact}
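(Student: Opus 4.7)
The plan is to address each of the three items in turn; all are classical pcf-theoretic statements from Shelah's \cite{cardinal_arithmetic}, so my sketch stays at the level of which tool to invoke rather than developing pcf theory from scratch.

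For item (1), the plan is to fix an arbitrary strictly increasing sequence $\vec{\nu} = \langle \nu_i \mid i < \cf(\mu) \rangle$ of regular cardinals cofinal in $\mu$ and an ideal $J$ on $\cf(\mu)$ extending the bounded ideal for which $\tau := \tcf(\prod \vec{\nu}/J)$ exists, and then to show $\tau \leq \mu^+$; this bounds $\pp(\mu)$ from above by $\mu^+$, and the reverse inequality is automatic. Let $S := \{\nu < \mu \mid \pp(\nu) = \nu^+\}$. Because $\cf(\mu)$ is uncountable and $S$ is stationary in $\mu$, one can thin $\vec{\nu}$ along a club of indices so that each $\nu_i$ is a limit of points in $S$, and each relevant tail sits inside an interval where the local pp-hypothesis applies. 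The main obstacle is the gluing step: turning the local hypotheses $\pp(\nu) = \nu^+$ on $S$ into the global bound $\tau \leq \mu^+$. This is exactly what Shelah's localization and ``no-hole'' theorems for pp at singulars of uncountable cofinality deliver; one feeds each finite (or bounded) sub-product into the local bound at an appropriate $\nu \in S$ and uses uncountable cofinality to assemble a global cover.

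For item (2), the plan is to unwind the definition of $\pp(\mu)$. By definition, $\pp(\mu)$ is the supremum, over strictly increasing sequences $\vec{\mu} = \langle \mu_i \mid i < \cf(\mu) \rangle$ of regular cardinals cofinal in $\mu$ and ideals $J$ on $\cf(\mu)$ extending the bounded ideal, of $\tcf(\prod \vec{\mu}/J)$ when this true cofinality exists. If $\pp(\mu) > \mu^+$, pick a witnessing pair $(\vec{\mu}, J)$ with $\tcf(\prod \vec{\mu}/J) > \mu^+$. Since $J$ extends the ideal of bounded subsets of $\cf(\mu)$, every $<_J$-cofinal sequence is a fortiori $<^*$-cofinal, where $<^*$ is the mod-bounded ordering. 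Hence $\cf(\prod \vec{\mu}, <^*) \geq \tcf(\prod \vec{\mu}/J) > \mu^+$, as required.

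For item (3), the plan is to invoke Shelah's cofinality formula, which asserts that for every singular $\mu$ one has $\mu^{\cf(\mu)} = \max\{\mu^+, 2^{\cf(\mu)}, \pp(\mu)\}$. Assuming $\SSH$ above $\kappa$, every singular $\mu > \kappa$ satisfies $\pp(\mu) = \mu^+$. The non-trivial case of $\SCH$ at $\mu$ is when $2^{\cf(\mu)} < \mu$; in that case the formula collapses to $\mu^{\cf(\mu)} = \mu^+$, which is precisely $\SCH$ at $\mu$. Since $\mu$ was an arbitrary singular cardinal above $\kappa$, $\SCH$ holds above $\kappa$. The only obstacle is having the cofinality formula in hand, which is standard pcf and appears in \cite{cardinal_arithmetic}.
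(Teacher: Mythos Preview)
The paper does not prove this statement at all: it is labelled a \emph{Fact}, and each item is simply cited from an external source (Shelah \cite{cardinal_arithmetic}, Matet \cite{matet_meeting_numbers}, and the author's own \cite{arithmetic_paper}, respectively). So there is no ``paper's proof'' to compare against; your sketch is supplementary material rather than a reconstruction.

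That said, two steps in your sketch deserve correction.

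In item (2), your justification is backwards. You write that ``every $<_J$-cofinal sequence is a fortiori $<^*$-cofinal,'' but since $J$ \emph{extends} the bounded ideal, the implication goes the other way: $f <^* g$ implies $f <_J g$, so every $<^*$-cofinal set is $<_J$-cofinal, not conversely. The inequality you want, $\cf(\prod\vec{\mu},<^*) \geq \tcf(\prod\vec{\mu}/J)$, is still correct, but for the opposite reason: any $<^*$-cofinal family is automatically $<_J$-cofinal, so the minimal size of a $<^*$-cofinal family is at least the minimal size of a $<_J$-cofinal family, which equals $\tcf(\prod\vec{\mu}/J)$ when the latter exists.

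In item (3), the formula $\mu^{\cf(\mu)} = \max\{\mu^+, 2^{\cf(\mu)}, \pp(\mu)\}$ is not a theorem of $\ZFC$ for arbitrary singular $\mu$; the value of $\mu^{\cf(\mu)}$ can be influenced by the behaviour of powers (equivalently, of $\pp$) at singular cardinals \emph{below} $\mu$, not just by $\pp(\mu)$ itself. The deduction of $\SCH$ above $\kappa$ from $\SSH$ above $\kappa$ genuinely uses the hypothesis at all singular cardinals in the interval, typically via an induction that combines Shelah's $\mathrm{cov} = \pp$ results with the local hypothesis $\pp(\nu) = \nu^+$ at every relevant $\nu$. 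Invoking the formula as stated is not sufficient; this is presumably why the paper defers to \cite[Proposition~4.18]{arithmetic_paper} rather than giving a one-line argument.
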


The connection between covering matrices and $\SSH$ comes via the following result.

\begin{theorem} \label{cp_ssh_thm} \cite[Theorem 4.19]{arithmetic_paper}
  Suppose that $\mu$ is a singular cardinal, $\theta = \cf(\mu)$, and $\vec{\mu} = \langle \mu_i \mid 
  i < \theta \rangle$ is an increasing sequence of regular cardinals converging to $\mu$. Suppose moreover 
  that $\mc D = \langle D(i,\beta) \mid i < \theta, ~ \beta < \mu^+ \rangle$ is a $\theta$-covering matrix 
  for $\mu^+$ such that
  \begin{enumerate}
    \item for all $i < \theta$ and $\beta < \mu^+$, we have $|D(i,\beta)| < \mu_i$;
    \item $\CP(\mc D)$ holds.
  \end{enumerate}
  Then $\cf(\prod \vec{\mu}, <^*) = \mu^+$.
\end{theorem}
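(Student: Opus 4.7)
The aim is to construct a $<^*$-cofinal family in $(\prod \vec{\mu}, <^*)$ of cardinality $\mu^+$; the reverse inequality $\cf(\prod \vec{\mu}, <^*) \geq \mu^+$ is standard pcf fare, since any family of cardinality $\mu$ in $\prod \vec{\mu}$ is bounded using the regularity of each $\mu_i$ together with $\theta = \cf(\mu)$. So it suffices to exhibit $\mu^+$-many functions in $\prod \vec{\mu}$ that are $<^*$-cofinal.

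The plan is to index the family by the witnessing set $A \subseteq \mu^+$ for $\CP(\mc D)$. Fix an increasing enumeration $A = \{\alpha_\zeta \mid \zeta < \mu^+\}$, and for each $\beta < \mu^+$ define $h_\beta \in \prod \vec{\mu}$ by
\[
  h_\beta(i) := \otp(D(i, \beta) \cap A).
\]
By hypothesis (1), $|D(i, \beta) \cap A| < \mu_i$, and since each $\mu_i$ is a cardinal, $h_\beta(i) < \mu_i$ as an ordinal, so $h_\beta \in \prod \vec{\mu}$. The candidate cofinal family is $\{h_\beta \mid \beta < \mu^+\}$, which has cardinality at most $\mu^+$.

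To verify cofinality, given $g \in \prod \vec{\mu}$, I would encode $g$ as a set $X_g \in [A]^\theta$, apply $\CP(\mc D)$ to obtain $(i_0, \beta_0)$ with $X_g \subseteq D(i_0, \beta_0)$, and then exploit the $\subseteq$-monotonicity of $D(\cdot, \beta_0)$ (a clause in the definition of covering matrix) to conclude $X_g \subseteq D(j, \beta_0)$ for every $j \geq i_0$. The natural encoding is $X_g := \{\alpha_{g(i)} \mid i < \theta\}$, padded up to size $\theta$ if necessary, so that for each $j \geq i_0$ the element $\alpha_{g(j)}$ lies in $D(j, \beta_0) \cap A$; one then hopes this forces $h_{\beta_0}(j) > g(j)$ on a tail of $j < \theta$.

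The main obstacle is precisely the last implication: witnessing that the particular element $\alpha_{g(j)}$ lies in $D(j, \beta_0)$ does not by itself force the order type of $D(j, \beta_0) \cap A$ to exceed $g(j)$, since the $g(j)$-many earlier elements of the enumeration are not guaranteed to be present. Overcoming this will require a strengthening of the encoding --- e.g., replacing $\{\alpha_{g(i)} \mid i < \theta\}$ with a $\theta$-sized subset of $A$ whose intersection with every $D(i, \beta)$ inherits an order-type lower bound from its sheer spread within $A$, or using a transitive-collapse reindexing of $A$ that makes the invariant $h_\beta(i)$ match up directly with the indices of encoded elements. Failing a direct encoding, the fallback strategy is a pcf-theoretic detour: assume toward contradiction that the family is not $<^*$-cofinal, extract a $<^*$-increasing $\mu^+$-sequence with a $<^*$-exact upper bound $\bar{h} \leq \vec{\mu}$ (in the sense of Shelah), and combine $\CP(\mc D)$ with the structural properties of $\bar{h}$ to produce a violation of the level bound $|D(i, \beta)| < \mu_i$. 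I expect the argument of \cite{arithmetic_paper} to proceed along these lines, with the cleanest formulation working inside a suitable elementary submodel of some $H_\chi$ that reflects both $\mc D$ and the hypothetical counterexample $g$.
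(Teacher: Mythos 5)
Your definition of the candidate family $h_\beta(i) := \otp(D(i,\beta)\cap A)$ is a natural and plausible first move, and your claim that $h_\beta \in \prod\vec\mu$ (from $|D(i,\beta)| < \mu_i$) and that the lower bound $\cf(\prod\vec\mu,<^*)\geq\mu^+$ is routine are both fine. But the verification of cofinality is genuinely missing, and you correctly identify the gap yourself: from $\alpha_{g(j)} \in D(j,\beta_0)\cap A$ one gets no useful lower bound on $\otp(D(j,\beta_0)\cap A)$. I would push this further and point out that the obstacle is structural, not just a matter of picking a better $\theta$-sized set $X_g \subseteq A$: a single application of $\CP(\mc D)$ to some $X \in [A]^\theta$ can only certify that $D(j,\beta_0)\cap A$ contains the (at most $\theta$-many) elements of $X$, so the best order-type bound obtainable this way is $\otp(D(j,\beta_0)\cap A) \geq \otp(X) \leq \theta$. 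Since $g(j)$ ranges up to $\mu_j$, which is eventually far larger than $\theta$, no encoding of $g$ into a single $\theta$-sized subset of $A$ followed by one application of $\CP(\mc D)$ can yield $h_{\beta_0}(j) > g(j)$ on a tail. So the "strengthened encoding'' route you sketch is not merely technically incomplete; as stated it cannot work, because the covering property only controls $\theta$-many elements at a time while you need control of $g(j)$-many.

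Your fallback sketch (exact upper bounds / elementary submodels) gestures at the right kind of machinery, but it is left entirely at the level of a hope and does not constitute a proof; in particular, it is not explained how $\CP(\mc D)$ together with the level bound $|D(i,\beta)| < \mu_i$ is to be brought into contradiction with the hypothetical eub, and the same "$\theta$-at-a-time'' limitation on $\CP(\mc D)$ must somehow be overcome there as well. Since this theorem is cited from the reference \cite{arithmetic_paper} rather than proved in the present paper, I cannot compare your sketch line-by-line with the intended argument, but as written your proposal has an acknowledged and genuine gap at exactly the step that carries all the content of the theorem.
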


We are now ready for the main result of this section, which will then yield Theorem A.

\begin{theorem} \label{nsp_cp_thm}
  Suppose that $\mu$ is a singular cardinal, $\theta = \cf(\mu)$, and there is a 
  regular cardinal $\kappa \in [\theta^{++}, \mu)$ such that
  $\cNSP(\power_\kappa \mu^+)$ holds. Then $\CP(\mc D)$ holds for every 
  uniform, transitive $\theta$-covering matrix $\mc D$ for $\mu^+$.
\end{theorem}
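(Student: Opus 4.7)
My plan is to prove the contrapositive: I would assume $\CP(\mc D)$ fails for some uniform, transitive $\theta$-covering matrix $\mc D$ on $\mu^+$ and construct a narrow concrete $\power_\kappa \mu^+$-system with no cofinal branch, contradicting $\cNSP(\power_\kappa \mu^+)$.

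The concrete construction I have in mind: for each $x \in \power_\kappa \mu^+$, assign $\gamma_x$ to be the minimum ordinal $\geq \ssup(x)$ that witnesses Theorem \ref{downward_coherence_thm} applied to $x$. This assignment is monotone (i.e., $x \subseteq y$ implies $\gamma_x \leq \gamma_y$), because downward coherence at $y$ restricts to $x$, so $\gamma_y$ is itself a valid witness for $x$. Next, let $A = \{x \in \power_\kappa \mu^+ : \cf(\sup(x)) = \theta^+\}$, which I would check is $\subseteq$-cofinal by extending any $y$ with a cofinal-in-$(\sup(y)+\theta^+)$ sequence of length $\theta^+$ (this stays in $\power_\kappa \mu^+$ since $\theta^+ < \kappa$). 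Finally, for $x \in A$, I define
\[
  S_x = \{x \cap D(i, \gamma_x) : i < \theta, ~ \sup(x \cap D(i, \gamma_x)) = \sup(x)\}.
\]

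The key points to verify are: $S_x$ is nonempty, by pigeonholing any $\theta^+$-length cofinal-in-$\sup(x)$ sequence in $x$ into the $\theta$ many options $D(i, \gamma_x)$ and using regularity of $\theta^+$ to extract a cofinal monochromatic subsequence; the width is at most $\theta$, so the system is narrow since $\theta^+ < \theta^{++} \leq \kappa$; and coherence follows from Theorem \ref{downward_coherence_thm} by choosing, for $x \subseteq y$ in $A$, an index $i_0 < \theta$ simultaneously past the downward-coherence threshold of $x$ at $\gamma_y$ (so that $x \cap D(i_0, \gamma_y) = x \cap D(i_0, \gamma_x)$) and past the pigeonhole thresholds that keep the traces cofinal in $\sup(x)$ and $\sup(y)$.

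The cofinality clause in the definition of $S_x$ is the linchpin, and this is the hard design choice I anticipate as the main obstacle: without it, branches could easily be bounded or even empty (e.g.\ when some small $D(i, \gamma_x)$ misses $x$), and the proof would collapse. With it, any cofinal branch $b$ produced by $\cNSP(\power_\kappa \mu^+)$ must be unbounded in $\mu^+$, since for each $\alpha < \mu^+$ I can pick $x$ in the branching set $A' = \{x \in A : b \cap x \in S_x\}$ containing $\alpha + 1$, and since $\sup(x)$ is a limit strictly above $\alpha$, the cofinality condition forces $b \cap x$ to contain some element exceeding $\alpha$. Moreover, $[b]^\theta$ is covered by $\mc D$: for each $X \in [b]^\theta \subseteq \power_\kappa \mu^+$, cofinality of $A'$ supplies $x \in A'$ with $X \subseteq x$, and then $X \subseteq b \cap x = x \cap D(i_x, \gamma_x) \subseteq D(i_x, \gamma_x)$. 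Hence $b$ witnesses $\CP(\mc D)$, contradicting the assumption that $\CP(\mc D)$ fails.
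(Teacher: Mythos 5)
Your proposal is essentially the paper's proof: both assign $\gamma_x$ via Theorem \ref{downward_coherence_thm}, define $S_x$ as the traces $x \cap D(i, \gamma_x)$ whose supremum equals $\sup(x)$, verify coherence by taking $i$ past the downward-coherence threshold, and then argue that a cofinal branch supplied by $\cNSP$ is unbounded in $\mu^+$ and has all its $\theta$-sized subsets covered by $\mc D$. The only (easily fixable) imprecision is your index set $A = \{x : \cf(\sup(x)) = \theta^+\}$, which admits sets $x$ with $\max(x) = \sup(x)$ and $x \cap \sup(x)$ bounded, for which your pigeonhole argument for $S_x \neq \emptyset$ does not directly apply; the paper's choice $A = \{x : \cf(\ssup(x)) > \theta\}$ rules out such $x$ automatically.
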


\begin{proof}
  Let $\lambda := \mu^+$.
  Fix a uniform, transitive $\theta$-covering matrix $\mc D = \langle D(i,\beta) 
  \mid i < \theta, ~ \beta < \lambda \rangle$ for $\lambda$, and let 
  $A := \{x \in \power_\kappa \lambda \mid \cf(\ssup(x)) > \theta \}$. Since 
  $\kappa > \theta^+$ is a regular cardinal, $A$ is cofinal in $\power_\kappa \lambda$; note that, 
  for $x \in A$, we have $\sup(x) = \ssup(x)$. 
  For each $x \in A$, let $\gamma_x < \mu^+$ be the least ordinal satisfying the 
  conclusion of Theorem~\ref{downward_coherence_thm}; namely, for all $\beta \in 
  [\gamma_x, \lambda)$ and all sufficiently large $j < \theta$, we have $x \cap 
  D(j, \beta) = x \cap D(j, \gamma_x)$. Note that we must have 
  $\gamma_x \geq \sup(x)$ and, if $x \subseteq y$ are both in $A$, then 
  $\gamma_x \leq \gamma_y$.
  
  For each $x \in A$, let 
  \[
    S_x := \{x \cap D(i, \gamma_x) \mid i < \theta \text{ and } \sup(x \cap D(i, 
    \gamma_x)) = \sup(x)\}.
  \]    
  Since $\cf(\sup(x)) > \theta$ and $x = \bigcup_{i < \theta} (x \cap D(i, \gamma_x))$, 
  it must be the case that $x \cap D(i, \gamma_x) \in S_x$ for all sufficiently large 
  $i < \theta$.
  
  We claim that $\mc S = \langle S_x \mid x \in A \rangle$ is a concrete 
  $\power_\kappa \lambda$-system. The only nontrivial condition to check is 
  clause (3) of Definition~\ref{concrete_system_def}. To this end, fix 
  $x \subseteq y$, both in $A$. We know that, for all sufficiently large $i < \theta$, 
  we have
  \begin{itemize}
    \item $\sup(x \cap D(i, \gamma_x)) = \sup(x)$;
    \item $\sup(y \cap D(i, \gamma_y)) = \sup(y)$;
    \item either $\gamma_x = \gamma_y$ or $\gamma_x \in D(i, \gamma_y)$; in either case, 
    since $\mc D$ is transitive, we have $D(i, \gamma_x) \subseteq D(i, \gamma_y)$.
  \end{itemize}
  Therefore, choosing $i < \theta$ sufficiently large, we have 
  $y \cap D(i, \gamma_y) \in S_y$ and 
  \[
    (y \cap D(i, \gamma_y)) \cap x = D(i, \gamma_y) \cap x = D(i, \gamma_x) \cap x 
    \in S_x,
  \]
  where the second equality holds by the choice of $\gamma_x$. Therefore,
  we have found $t \in S_y$ for which $t \cap x \in S_x$, as desired.
  
  Moreover, we have $|S_x| \leq \theta$ for all $x \in A$, so $\mc S$ is a 
  \emph{narrow} concrete $\power_\kappa \lambda$-system. We can therefore apply
  $\cNSP(\power_\kappa \lambda)$ to find a cofinal branch $b$ through $\mc S$.
  
  \begin{claim}
    $b$ is unbounded in $\lambda$.
  \end{claim}
  
  \begin{proof}
    Fix $\alpha < \lambda$; we will show that $b \setminus \alpha$ is nonempty. 
    Find $x \in A$ such that $\alpha \in x$ and $b \cap x \in S_x$. By the definition 
    of $S_x$, it follows that $\sup(b \cap x) = \sup(x) > \alpha$.
  \end{proof}
  
  We will therefore be done if we show that $[b]^\theta$ is covered by 
  $\mc D$, as then $b$ will witness $\CP(\mc D)$. To this end, fix $z \in [b]^\theta$. 
  Since $b$ is a cofinal branch through $\mc S$, we can find $x \in A$ such that 
  $z \subseteq x$ and $b \cap x \in S_x$. Then $z \subseteq b \cap x$, and there is 
  $i < \theta$ such that $b \cap x = x \cap D(i, \gamma_x)$; therefore, 
  $z \subseteq D(i, \gamma_x)$, as desired.
\end{proof}

We are now ready to prove Theorem A, asserting that, for a regular cardinal $\kappa \geq \omega_2$, 
$\cNSP_\kappa$ implies $\SSH$ above $\kappa$.

\begin{proof}[Proof of Theorem A]
  By Fact \ref{pp_fact}(1), to establish $\SSH$ above $\kappa$, it suffices to show that $\pp(\mu) 
  = \mu^+$ for every singular cardinal $\mu > \kappa$ of countable cofinality. Fix such a $\mu$.
  Next, by Fact \ref{pp_fact}(2), to establish $\pp(\mu) = \mu^+$, it suffices to prove that 
  $\cf(\prod \vec{\mu}, <^*) = \mu^+$ for every increasing sequence of regular cardinals 
  $\vec{\mu} = \langle \mu_i \mid i < \omega \rangle$ converging to $\mu$. Fix such a sequence 
  $\vec{\mu}$. By the proof of \cite[Lemma 2.4]{sharon_viale} (cf.\ also \cite[Lemma 4.4]{arithmetic_paper}), 
  there is a uniform, transitive $\omega$-covering 
  matrix $\mc D = \langle D(i,\beta) \mid i < \omega, ~ \beta < \mu^+ \rangle$ for $\mu^+$ such that 
  $|D(i,\beta)| < \mu_i$ for all $i < \omega$ and $\beta < \mu^+$. By Theorem \ref{nsp_cp_thm} 
  and the assumption that $\cNSP_\kappa$ holds, we know that 
  $\CP(\mc D)$ holds, and then, by Theorem \ref{cp_ssh_thm}, we have $\cf(\prod \vec{\mu}, <^*) 
  = \mu^+$, as desired.
\end{proof}

\section{General systems} \label{system_sec}

We now move to the more general setting of systems indexed by arbitrary directed partial orders.
Given a partial order $(\Lambda, \leq_\Lambda)$, we will sometimes abuse notation and use the symbol 
$\Lambda$ to denote the partial order. If a partial order is denoted by $\Lambda$, it should be 
understood that its order relation is denoted by $\leq_\Lambda$. The strict portion of $\leq_\Lambda$ 
will be denoted by $<_\Lambda$. Given $u \in \Lambda$, let $u^\uparrow$ denote $\{v \in \Lambda \mid 
u <_\Lambda v\}$.

\begin{definition}
  Suppose that $\Lambda$ is a partial order and $\kappa$ is an infinite cardinal. We say that 
  $\Lambda$ is \emph{$\kappa$-directed} if every element of $\power_\kappa \Lambda$ has an upper 
  bound, i.e., for every $x \in \power_\kappa \Lambda$, there is $v \in \Lambda$ 
  such that $u \leq_{\Lambda} v$ for all $u \in x$. We say that $\Lambda$ is \emph{directed} 
  if it is $\aleph_0$-directed; equivalently, for all $u,v \in \Lambda$, there is $w \in \Lambda$ 
  such that $u,v \leq_\Lambda w$.
\end{definition}

\begin{definition}
  Suppose that $\Lambda$ is a directed partial order. The \emph{directedness} of $\Lambda$, denoted 
  $d_\Lambda$, is the largest cardinal $\kappa$ such that $\Lambda$ is $\kappa$-directed. It is readily 
  verified that this is well-defined and that $d_\Lambda$ is a regular cardinal for every directed partial 
  order $\Lambda$.
\end{definition}

\begin{definition}
	Let $R$ be a binary relation on a set $X$. For $x,y \in X$, we will typically write 
	$x <_R y$ to denote $(x,y) \in R$ and $x \leq_R y$ to denote the statement 
	\[
	  (x,y) \in R \text{ or } x = y.
	\] 
	Two elements $x$ and $y$ of $X$ are said to be
	\emph{$R$-comparable} if either $x \leq_R y$ or $y \leq_R x$. Otherwise, $x$ and $y$ 
	are \emph{$R$-incomparable}.
\end{definition}

\begin{definition} \label{system_def}
  Let $\Lambda$ be a directed partial order. A \emph{$\Lambda$-system} is a structure
  \[
    \mc S = \left \langle \langle S_u \mid u \in \Lambda \rangle, \mc R \right \rangle
  \]
  satisfying the following conditions.
  \begin{enumerate}
    \item $\langle S_u \mid u \in \Lambda \rangle$ is a sequence of pairwise disjoint nonempty sets. 
    We will sometimes refer to $\bigcup_{u \in \Lambda} S_u$ as the \emph{underlying set} of 
    $\mc S$, and we will sometimes simply denote it by $S$. For each $x \in S$, let $\ell(x)$ denote 
    the unique $u \in \Lambda$ such that $x \in S_u$.
    \item $\mc R$ is a nonempty set of binary, transitive relations on $S$.
    \item For all $x,y \in S$ and $R \in \mc R$, if $x <_R y$, then $\ell(x) <_\Lambda \ell(y)$.
    \item \label{treelike_clause} 
    For all $x,y,z \in S$ and $R \in \mc R$, if $x,y <_R z$ and $\ell(x) \leq_{\Lambda} 
    \ell(y)$, then $x \leq_R y$.
    \item \label{completeness_clause}
    For all $(u,v) \in \Lambda^{[2]}$, there are $x \in S_u$, $y \in S_v$, and 
    $R \in \mc R$ such that $x <_R y$.
  \end{enumerate}
  If $\mc S$ is a $\Lambda$-system, then we define $\width(\mc S)$ to be $\max\{\sup\{|S_u| \mid 
  u \in \Lambda\}, |\mc R|\}$. We say that $\mc S$ is a \emph{narrow $\Lambda$-system} if 
  $\width(\mc S)^+ < d_\Lambda$.
  
  If $\mc S = \left \langle \langle S_u \mid u \in \Lambda \rangle, \mc R \right \rangle$ is a 
  $\Lambda$-system, $x,y \in S$, and $R \in \mc R$, then we say that $x$ and $y$ are 
  \emph{$R$-compatible}, denoted $x \parallel_R y$, if there is $z \in S$ such that $x,y \leq_R z$. 
  We say that $x$ and $y$ are \emph{$R$-incompatible}, denoted $x \perp_R y$, if there is no 
  such $z$. Note that, if $x \parallel_R y$ and $\ell(x) \leq_{\Lambda} \ell(y)$, then Clause 
  \ref{treelike_clause} above implies that $x \leq_R y$.
  
  Given $R \in \mc R$, a \emph{branch through $R$ in $\mc S$} is a set 
  $b \subseteq S$ such that, for all $x,y \in b$, we have $x \parallel_R y$ 
  (note that this implies that $|b \cap S_u| \leq 1$ for all $u \in \Lambda$).
  We will sometimes say that $b$ is a \emph{branch in $\mc S$} to mean that there is 
  $R \in \mc R$ such that $b$ is a branch through $R$ in $\mc S$.
  A branch $b$ is said to be \emph{cofinal} if $\{u \in \Lambda \mid b \cap S_u \neq \emptyset\}$ 
  is cofinal in $\Lambda$.
\end{definition}

\begin{remark}
  The concrete $\power_\kappa \lambda$-systems of Section~\ref{concrete_sec} are indeed 
  special cases of Definition~\ref{system_def}:
  suppose that $\mc S = \langle S_x \mid x \in A \rangle$ is a concrete $\power_\kappa 
  \lambda$-system. Then there is a natural way to view $\mc S$ as an $(A, \subsetneq)$-system 
  in the sense of Definition~\ref{system_def}. Namely, for each $x \in A$, let 
  $S'_x := \{x\} \times S_x$, and define a binary relation $R$ on $\bigcup_{x \in A} 
  S'_x$ by letting $(x,t) <_R (y,s)$ iff $x \subsetneq y$ and $y \cap t = x$. Then 
  $\mc S' := \langle \langle S'_x \mid x \in A \rangle, \{R\} \rangle$ is readily verified to be 
  an $(A, \subsetneq)$-system in the sense of Definition~\ref{system_def}, and cofinal branches 
  through $\mc S$ in the sense of Definition~\ref{concrete_system_def} naturally correspond to 
  cofinal branches through $\mc S'$ in the sense of Definition~\ref{system_def}. 
\end{remark}

\begin{definition}
  Let $\Lambda$ be a directed partial order. We say that the \emph{$\Lambda$-narrow system property} 
  (denoted $\NSP(\Lambda)$) holds if every narrow $\Lambda$-system has a cofinal branch.
\end{definition}

Since all of the questions considered in this paper become trivial when addressing systems indexed by 
partial orders with maximal elements, we will always assume when working with arbitrary
$\Lambda$-systems that $\Lambda$ has no maximal element, even when this assumption is not explicitly 
stated.

For notational simplicity, we often prefer to work with systems having only one relation. The 
following proposition shows that, in the context of questions about the existence of narrow 
$\Lambda$-systems without cofinal branches, this involves no loss of generality.

\begin{proposition} \label{single_relation_prop}
  Suppose that $\Lambda$ is a directed partial order and 
  \[
    \mc S = \left \langle \langle S_u \mid u \in \Lambda \rangle, \mc R \right \rangle
  \] 
  is a $\Lambda$-system. 
  Then there is a $\Lambda$-system $\mc S' = \left \langle \langle S'_u \mid u \in \Lambda \rangle, 
  \mc R' \right \rangle$ such that
  \begin{itemize}
    \item $|\mc R'| = 1$;
    \item $\width(\mc S') = \width(\mc S)$;
    \item $\mc S'$ has a cofinal branch if and only if $\mc S$ has a cofinal branch.
  \end{itemize}
\end{proposition}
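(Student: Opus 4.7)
The plan is to merge the relations in $\mc R$ into a single relation by ``coloring'' each node of $\mc S$ by the relation under which it is being considered. Concretely, for each $u \in \Lambda$ set
\[
  S'_u := S_u \times \mc R,
\]
and let $S' := \bigcup_{u \in \Lambda} S'_u$; the projection to the first coordinate sends $S'_u$ to $S_u$ and the $S_u$ are pairwise disjoint, so the $S'_u$ are pairwise disjoint as well. Define a single binary relation $R^*$ on $S'$ by declaring $(x, R) <_{R^*} (y, Q)$ iff $R = Q$ and $x <_R y$, and let $\mc R' := \{R^*\}$ and $\mc S' := \left \langle \langle S'_u \mid u \in \Lambda \rangle, \mc R' \right \rangle$.

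Next I would verify the five clauses of Definition~\ref{system_def} for $\mc S'$. Transitivity of $R^*$ and the level condition (clause~(3)) follow immediately from the corresponding properties of each $R \in \mc R$, since $R^*$-comparable pairs must share the same second coordinate. The tree-like clause~(4) reduces to the tree-like clause for the common relation $R$ identified by the second coordinates, and completeness (clause~(5)) is obtained by taking the witness $x <_R y$ for $(u,v) \in \Lambda^{[2]}$ provided by completeness of $\mc S$ and passing to $(x,R) <_{R^*} (y,R)$. For the width bound, note that $|S'_u| = |S_u| \cdot |\mc R|$, so in the infinite case (which is the only one of real interest, since narrowness is automatic when widths are finite and $d_\Lambda$ is large) one has $\sup_u |S'_u| = \max\{\sup_u |S_u|, |\mc R|\} = \width(\mc S)$, and $|\mc R'| = 1$ contributes nothing, giving $\width(\mc S') = \width(\mc S)$.

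Finally I would check the branch correspondence. If $b \subseteq S$ is a cofinal branch through some $R \in \mc R$ in $\mc S$, then $b' := \{(x, R) \mid x \in b\}$ is a cofinal branch through $R^*$ in $\mc S'$: any two elements of $b'$ are $R$-compatible in $\mc S$, and the common witness $z$ yields $(z, R)$ as a witness in $\mc S'$. Conversely, suppose $b' \subseteq S'$ is a cofinal branch through $R^*$ in $\mc S'$. For any two elements $(x, R), (y, Q) \in b'$ there exists $(z, P) \in S'$ with $(x, R), (y, Q) \leq_{R^*} (z, P)$, and unravelling the definition of $\leq_{R^*}$ one sees in every case that $R = Q$; hence all elements of $b'$ share a common second coordinate $R$, and the projection $b := \{x \mid (x, R) \in b'\}$ is a cofinal branch through $R$ in $\mc S$.

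The step I expect to be most delicate is not any individual verification, which is mechanical, but rather the uniform handling of the width computation so that $\width(\mc S') = \width(\mc S)$ holds as an equality and not merely as a bound; this requires noting that each $|S'_u|$ dominates both $|S_u|$ and $|\mc R|$ in the relevant infinite regime, and separately observing that narrowness is trivially preserved when widths are finite.
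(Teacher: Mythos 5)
Your proposal is correct and follows exactly the same construction as the paper's proof: replace $S_u$ by $S_u \times \mc R$, compare two pairs only when their second coordinates agree, and observe that any cofinal branch in $\mc S'$ must have constant second coordinate. The one place you diverge is in flagging that the width equality $\width(\mc S') = \width(\mc S)$ can fail when $\width(\mc S)$ is finite (e.g.\ $\sup_u|S_u| = 2$, $|\mc R| = 3$ gives $\width(\mc S') = 6 \neq 3$); the paper glides over this, and you are right that it is harmless since finite-width systems are automatically narrow and Proposition~\ref{finite_width_prop} handles them, but a fully literal reading of the stated proposition would require either restricting to infinite width or relaxing the equality to a cardinality bound of the form $\width(\mc S') \le \width(\mc S)$ when $\width(\mc S)$ is infinite and $\width(\mc S') < \aleph_0$ otherwise.
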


\begin{proof}
  For each $u \in \Lambda$, let $S'_u := S_u \times \mc R$, and let $\mc R'$ consist of a single 
  binary relation, $<'$. For all $x_0,x_1 \in S$ and $R_0, R_1 \in \mc R$, let $(x_0,R_0) <' (x_1,R_1)$ 
  if and only if $R_0=R_1$ and $x_0 <_{R_0} x_1$ (in $\mc S$). It is readily verified that $\mc S'$ thus defined 
  is a $\Lambda$-system and $\width(\mc S') = \width(\mc S)$. If $R \in \mc R$ and $b \subseteq S$ is a 
  cofinal branch through $R$ in $\mc S$, then $b' := \{(x,R) \mid x \in b\}$ is a cofinal branch 
  in $\mc S'$. Conversely, if $d'$ is a cofinal branch in $\mc S'$, then there must be a single 
  $R \in \mc R$ such that every element of $d'$ is of the form $(x,R)$ for some $x \in S$. Then 
  $d := \{x \in S \mid (x,R) \in d'\}$ is a cofinal branch through $R$ in $\mc S$.
\end{proof}
	
The following basic proposition is reminiscent of K\"{o}nig's Infinity Lemma, asserting that every 
infinite finitely-branching tree has an infinite branch.

\begin{proposition} \label{finite_width_prop}
  Suppose that $\Lambda$ is a directed partial order and $\mc S$ is a $\Lambda$-system with finite width. 
  Then $\mc S$ has a cofinal branch.
\end{proposition}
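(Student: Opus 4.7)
The plan is to reduce to the single-relation case via Proposition~\ref{single_relation_prop} and then exploit finite width through an ultrafilter construction. So I would first assume $|\mc R| = 1$, let $R$ denote the unique relation, and for each $x \in S$ introduce $P_x = \{w >_\Lambda \ell(x) : \exists y \in S_w,\, x <_R y\}$; I would fix a choice function $s_x \colon P_x \to S$ picking, for each $w \in P_x$, some $R$-successor $s_x(w) \in S_w$ of $x$, and extend $s_x$ to a total map $\tilde{s}_x \colon \Lambda \to S \cup \{*\}$ by sending $w \notin P_x$ to $*$.

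Next I would fix an ultrafilter $\mc U$ on $\Lambda$ extending the filter generated by the end-segments $\{v^\uparrow : v \in \Lambda\}$; this filter is proper because $\Lambda$ is directed and has no maximum. For each $v$, clause~\ref{completeness_clause} gives $v^\uparrow \subseteq \bigcup_{y \in S_v} P_y$, so since $S_v$ is finite and $v^\uparrow \in \mc U$, some $y \in S_v$ must satisfy $P_y \in \mc U$. The key object is the ultraproduct $\prod_{w \in \Lambda}(S_w \cup \{*\})/\mc U$, which is finite because every factor has size at most $\width(\mc S) + 1$. For each $y$ with $P_y \in \mc U$, I would write $[y]$ for the class of $\tilde{s}_y$ in this ultraproduct.

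The step I expect to be the most delicate is a compatibility observation: if $P_{y_1}, P_{y_2} \in \mc U$, $\ell(y_1) \leq_\Lambda \ell(y_2)$, and $[y_1] = [y_2]$, then $y_1 \leq_R y_2$. To prove this I would intersect the three $\mc U$-large sets $P_{y_1}$, $P_{y_2}$, and $\{w : \tilde{s}_{y_1}(w) = \tilde{s}_{y_2}(w)\}$; any $w$ in the nonempty intersection produces a single $z \in S_w$ that is an $R$-successor of both $y_1$ and $y_2$, and clause~\ref{treelike_clause} then delivers $y_1 \leq_R y_2$.

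To finish, I would, for each type $\tau$ in the finite ultraproduct, set $V_\tau = \{v \in \Lambda : \exists y \in S_v \text{ with } P_y \in \mc U \text{ and } [y] = \tau\}$. These finitely many sets cover $\Lambda$, so directedness and the absence of a maximum force some $V_{\tau^*}$ to be cofinal. Choosing a witness $y_v$ for each $v \in V_{\tau^*}$, I would check that $b := \{y_v : v \in V_{\tau^*}\}$ is pairwise $R$-compatible: for $\Lambda$-comparable pairs the compatibility observation applies directly, and for $\Lambda$-incomparable pairs one uses directedness and cofinality of $V_{\tau^*}$ to find a common $w \in V_{\tau^*}$ above both and then applies the observation twice, so that $y_w$ serves as a common $R$-upper bound. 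The resulting $b$ is the desired cofinal branch; the main obstacle, coordinating branch-element choices at cofinally many levels, is handled by pigeonholing on the finite set of ultratypes.
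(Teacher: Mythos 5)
Your proof is correct and takes essentially the same approach as the paper's: both fix an ultrafilter $\mathcal{U}$ on $\Lambda$ extending the end-segment filter $\{u^\uparrow : u \in \Lambda\}$ and use the finiteness of the levels to pigeonhole out a cofinal set of pairwise $R$-compatible elements. The only real difference is bookkeeping: the paper enumerates each level as $\langle x_{u,k} \mid k < n\rangle$ and stabilizes the index pair $(j(u),k(u))$ via two applications of the ultrafilter, whereas you package the same data as the germ $[\tilde s_y]$ in the finite ultraproduct $\prod_{w}(S_w\cup\{*\})/\mathcal{U}$ and pigeonhole on ultratypes, with your compatibility observation for elements of the same ultratype playing precisely the role of the paper's step that any $v\in X_{u_0}\cap X_{u_1}$ gives a common $R$-upper bound and witnesses $x_{u_0,j^*}\parallel_R x_{u_1,j^*}$.
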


\begin{proof}
  By Proposition \ref{single_relation_prop}, we can assume that $\mc S$ has a single relation, which 
  we will denote by $R$. Since $\width(\mc S)$ is finite, we can fix an $n < \omega$ such that 
  $|S_u| \leq n$ for all $u \in \Lambda$. Enumerate each $S_u$ as $\langle x_{u, k} \mid k < n \rangle$, 
  with repetitions if necessary (i.e., if $|S_u| < n$). Since $\Lambda$ is 
  directed, $\mc F := \{u^\uparrow \mid u \in \Lambda\}$ is a filter over $\Lambda$. Let $\mc U$ be 
  an ultrafilter over $\Lambda$ extending $\mc F$.
  
  Temporarily fix $u \in \Lambda$. Since $\mc S$ is a $\Lambda$-system, it follows that, for every 
  $v \in u^\uparrow$, we can find (not necessarily unique) $j(u,v), k(u,v) < n$ such that 
  $x_{u,j(u,v)} <_R x_{v,k(u,v)}$. Since $\mc U$ is an ultrafilter extending $\mc F$, we can then find 
  fixed numbers $j(u), k(u) < n$ such that the set
  \[
    X_u := \{v \in u^\uparrow \mid (j(u,v), k(u,v)) = (j(u),k(u))\}
  \]
  is in $\mc U$.\footnote{We are using the implicit assumption that $\Lambda$ has no maximal element 
  to ensure that we can find such $j(u)$ and $k(u)$.} 
  We can then find fixed numbers $j^*,k^* < n$ such that the set
  \[
    Y := \{u \in \Lambda \mid (j(u),k(u))=(j^*,k^*)\}
  \]
  is in $\mc U$. In particular, $Y$ is cofinal in $\Lambda$. Let $b := \{x_{u,j^*} \mid u \in Y\}$. 
  Since $Y$ is cofinal in $\Lambda$, in order to show that $b$ is a cofinal branch in $\mc S$ it suffices 
  to show that, for all $u_0, u_1 \in Y$, we have $x_{u_0,j^*} \parallel_R x_{u_1, j^*}$. To 
  this end, fix such $u_0, u_1$. Since $X_{u_0}, X_{u_1} \in \mc U$, we can 
  fix $v \in X_{u_0} \cap X_{u_1}$. then $x_{u_0, j^*}, x_{u_1, j^*}, <_{R} x_{v,k^*}$, 
  so $x_{u_0,j^*} \parallel_R x_{u_1, j^*}$, as desired.
\end{proof}

An analogous result holds at strongly compact cardinals:

\begin{proposition} \label{strongly_compact_prop}
  Suppose that $\kappa$ is a strongly compact cardinal, $\Lambda$ is a directed partial order with 
  $d_\Lambda \geq \kappa$, and $\mc S$ is a $\Lambda$-system such that $\width(\mc S) < \kappa$. Then 
  $\mc S$ has a cofinal branch.
\end{proposition}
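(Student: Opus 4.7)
The plan is to reduce to the single-relation case via Proposition \ref{single_relation_prop} and then exploit strong compactness through an elementary embedding. After this reduction, $\mc R = \{R\}$ and $\theta := \width(\mc S) < \kappa$; note that $\kappa$-directedness of $\Lambda$ forces $|\Lambda| \geq \kappa$. Using that $\kappa$ is $|\Lambda|$-strongly compact, I would fix an elementary embedding $j : V \to M$ with $\crit(j) = \kappa$ and a set $s \in M$ with $j[\Lambda] \subseteq s$ and $|s|^M < j(\kappa)$. Since $M \models j(\Lambda)$ is $j(\kappa)$-directed, there is an upper bound $v^* \in j(\Lambda)$ of $s$; using that $\Lambda$ has no maximum, one may further arrange $j(w) <_{j(\Lambda)} v^*$ for every $w \in \Lambda$.

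Next I would analyze the level $j(S)_{v^*}$ of $j(\mc S)$ in $M$. Since $\theta < \kappa = \crit(j)$, we have $j(\theta) = \theta$, so $|j(S)_{v^*}|^M \leq \theta$, and this cardinality is preserved in $V$. Moreover $j(S_u) = j[S_u]$ for each $u \in \Lambda$, so any $<_{j(R)}$-predecessor of an element of $j(S)_{v^*}$ that lies in $j(S_u)$ is of the form $j(x)$ for a unique $x \in S_u$ (uniqueness by the treelike clause~(4) applied in $M$).

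The heart of the proof is to produce $y^* \in j(S)_{v^*}$ whose set of predecessor levels
\[
  D_{y^*} := \{u \in \Lambda : y^* \text{ has a } <_{j(R)}\text{-predecessor in } j(S_u)\}
\]
is cofinal in $\Lambda$. If no such $y^*$ existed, then by choice for each $y \in j(S)_{v^*}$ there would be some $u_y \in \Lambda$ with $D_y \cap u_y^\uparrow = \emptyset$. The collection $\{u_y : y \in j(S)_{v^*}\}$ is a subset of $\Lambda$ of size at most $\theta < \kappa \leq d_\Lambda$, so by $\kappa$-directedness it admits an upper bound $u^* \in \Lambda$; choosing any $u >_\Lambda u^*$, no element of $j(S)_{v^*}$ would have a $<_{j(R)}$-predecessor in $j(S_u) = j(\mc S)_{j(u)}$, contradicting clause~(5) of $j(\mc S)$ applied in $M$ to the pair $(j(u), v^*) \in j(\Lambda)^{[2]}$. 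I expect this step, which bridges the width bound against the directedness of $\Lambda$, to be the main conceptual point.

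Given such a $y^*$, for each $u \in D_{y^*}$ let $x^*(u) \in S_u$ be the unique element with $j(x^*(u)) <_{j(R)} y^*$, and set $b := \{x^*(u) : u \in D_{y^*}\}$. Cofinality of $b$ in $\mc S$ follows from cofinality of $D_{y^*}$ in $\Lambda$. For the branch property, given $u_0, u_1 \in D_{y^*}$, the element $y^*$ witnesses in $M$ that $\exists z\,(j(x^*(u_0)) <_{j(R)} z \wedge j(x^*(u_1)) <_{j(R)} z)$; by elementarity of $j$, the analogous existence statement holds in $V$, yielding $x^*(u_0) \parallel_R x^*(u_1)$. Hence $b$ is a cofinal branch through $\mc S$.
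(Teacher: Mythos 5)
Your proof is correct, but it takes a genuinely different route from the paper's. The paper's proof is the ultrafilter version of Proposition~\ref{finite_width_prop}: since $d_\Lambda \geq \kappa$, the filter $\mc{F} = \{u^\uparrow \mid u \in \Lambda\}$ of upward cones is $\kappa$-complete, and strong compactness is used to extend it to a $\kappa$-complete ultrafilter $\mc{U}$ on $\Lambda$; one then stabilizes indices using $\kappa$-completeness and reads a branch off a $\mc{U}$-large set, exactly mirroring the K\"{o}nig-lemma argument. You instead take the dual, model-theoretic route: an elementary embedding $j : V \to M$ witnessing $|\Lambda|$-strong compactness, a node $v^* \in j(\Lambda)$ above $j[\Lambda]$, and a pull-back of $<_{j(R)}$-predecessors of a suitable $y^* \in j(\mc S)_{v^*}$. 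The pigeonhole step you single out as the heart of the matter --- trading $|j(\mc S)_{v^*}| \leq \theta$ against $d_\Lambda \geq \kappa > \theta$ to find a $y^*$ whose predecessor levels are cofinal --- is precisely what the paper later isolates as Proposition~\ref{full_cofinal_prop}: the sets $b_y := \{x \in S \mid j(x) <_{j(R)} y\}$ for $y \in j(\mc S)_{v^*}$ form a full set of branches, one of which must be cofinal. So your argument is in fact structurally closer to the proof of Theorem~\ref{single_collapse_theorem} than to the paper's own proof of this proposition. Both are standard and of comparable length; the ultrafilter version stays entirely combinatorial and continues the generalized K\"{o}nig's lemma theme, while your embedding version foreshadows the full-set-of-branches machinery that drives Sections~\ref{preservation_sec} and \ref{consistency_sec}.
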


\begin{proof}
  The proof is essentially the same as that of Proposition \ref{finite_width_prop} and is thus mostly 
  left to the reader. We remark only that, due to the fact that $d_\Lambda \geq \kappa$, the filter 
  $\mc F := \{u^\uparrow \mid u \in \Lambda\}$ is $\kappa$-complete and, since $\kappa$ is strongly 
  compact, it can be extended to a $\kappa$-complete ultrafilter $\mc U$ over $\Lambda$. The rest of the 
  proof is precisely as in Proposition \ref{finite_width_prop}.
\end{proof}

As mentioned already, classical narrow systems were introduced by Magidor and Shelah in the context of 
the study of the tree property at successors of singular cardinals; their first application came in 
the proof that, if $\mu$ is a singular limit of strongly compact cardinals, then the tree property holds 
at $\mu^+$ \cite[Theorem 3.1]{magidor_shelah}. To help get a feel for the utility of narrow $\Lambda$-systems, 
we present here the analogous result in the more general setting. We first need to recall the notion of 
a $\kappa$-$\Lambda$-tree for an arbitrary directed partial order $\Lambda$.

\begin{definition}[\cite{kurepa_paper}]
  Let $\Lambda$ be a directed partial order. A \emph{$\Lambda$-tree} is a structure 
  $\mc T = (\langle T_u \mid u \in \Lambda \rangle, <_{\mc T})$ such that the 
  following conditions all hold.
  \bce[(i)]
    \item $\langle T_u \mid u \in \Lambda \rangle$ is a sequence of nonempty, pairwise disjoint sets.
    \item $<_{\mc T}$ is a transitive partial ordering on $\bigcup_{u \in \Lambda} T_u$.
    \item For all $u,v \in \Lambda$, all $s \in T_u$, and all $t \in T_v$, if 
    $s <_{\mc T} t$, then $u <_\Lambda v$.
    \item $<_{\mc T}$ is \emph{tree-like}, i.e., for all $u <_\Lambda v <_\Lambda w$, all 
    $r \in T_u$, all $s \in T_v$ and all $t \in T_w$, if $r, s <_{\mc T} t$, then $r <_{\mc T} s$.
    \item For all $u \leq_\Lambda v$ in $\Lambda$ and all $t \in T_v$, there is a unique $s \in T_u$, 
    denoted $t \restriction u$, such that $s \leq_{\mc T} t$. 
  \ece 
  For a cardinal $\kappa$, we say that $\mc T$ is a $\kappa$-$\Lambda$-tree if, in addition to the 
  above requirements, we have $|T_u| < \kappa$ for all $u \in \Lambda$.
  
  Suppose that $\mc T$ is a $\Lambda$-tree. A \emph{cofinal branch} through $\mc T$ is a function 
  $b \in \prod_{u \in \Lambda} T_u$ such that, for all $u <_\Lambda v$ in $\Lambda$, we have 
  $b(u) <_{\mc T} b(v)$. The \emph{$(\kappa, \Lambda)$-tree property}, denoted $\TP_\kappa(\Lambda)$, 
  is the assertion that every $\kappa$-$\Lambda$-tree has a cofinal branch. We let 
  $\TP(\Lambda)$ denote $\TP_{d_\Lambda}(\Lambda)$.
\end{definition}

\begin{theorem} \label{strongly_compact_tp_thm}
  Suppose that $\mu$ is a singular limit of strongly compact cardinals and $\Lambda$ is a 
  $\mu^+$-directed partial order. Then $\TP_{\mu^+}(\Lambda)$ holds.
\end{theorem}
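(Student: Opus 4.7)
The plan mirrors the classical two-step Magidor--Shelah strategy: produce a narrow $\Lambda$-subsystem of the given tree, apply strong compactness to extract a cofinal branch through the subsystem, and then lift the branch back to $\mc T$. Fix $\theta := \cf(\mu)$ and an increasing sequence $\langle \kappa_i \mid i < \theta \rangle$ of strongly compact cardinals cofinal in $\mu$, and let $\mc T = \langle \langle T_u \mid u \in \Lambda \rangle, <_{\mc T}\rangle$ be a $\mu^+$-$\Lambda$-tree, so $|T_u| \leq \mu$ for every $u$.

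The first step is to identify a cofinal $B \subseteq \Lambda$ and some strongly compact $\kappa < \mu$ such that $|T_u| < \kappa$ for every $u \in B$. A pigeonhole argument based on the $\mu^+$-directedness (and hence $\theta^+$-directedness) of $\Lambda$ together with the cofinality of the $\kappa_i$ in $\mu$ yields the following dichotomy: either some $B_{i^*} := \{u \in \Lambda \mid |T_u| < \kappa_{i^*}\}$ is already cofinal in $\Lambda$---in which case we set $B := B_{i^*}$, $\kappa := \kappa_{i^*}$---or there is some $v^* \in \Lambda$ with $|T_u| = \mu$ for all $u \geq_\Lambda v^*$. In the residual case, I iterate: replace $\mc T$ with the subtree of extensions of a suitably chosen $t^* \in T_{v^*}$, which is itself a $\mu^+$-tree over a cofinal, $\mu^+$-directed subset of $\Lambda$, and re-run the dichotomy. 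Managing this iteration so that it terminates in the good case is the principal technical hurdle; it relies on the cofinality of the $\kappa_i$ in $\mu$ combined with $\mu^+$-directedness.

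With $B$ and $\kappa$ in hand, $B$ inherits $\mu^+$-directedness from $\Lambda$ (any subset of $B$ of size at most $\mu$ has an upper bound in $\Lambda$, hence is dominated by an element of the cofinal set $B$), so $d_B \geq \mu^+ > \kappa$. The structure
\[
  \mc S := \langle \langle T_u \mid u \in B \rangle, \{<_{\mc T}\} \rangle
\]
is a $B$-system in the sense of Definition~\ref{system_def}: clauses (1)--(4) are immediate from the $\Lambda$-tree axioms, and for clause (5) note that, for $u <_B v$ and any $t \in T_v$, we have $t \restriction u \in T_u$ with $t \restriction u <_{\mc T} t$. Since $\width(\mc S) < \kappa$, Proposition~\ref{strongly_compact_prop} produces a cofinal branch $b$ through $\mc S$. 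I then lift $b$ to $\mc T$: let $C := \{u \in B \mid b \cap T_u \neq \emptyset\}$, write $b(u)$ for the unique element of $b \cap T_u$ when $u \in C$, and for each $w \in \Lambda$ set $b'(w) := b(u) \restriction w$, where $u \in C$ is chosen with $w \leq_\Lambda u$. The directedness of $C$ (inherited from that of $\Lambda$ via its cofinality) and the tree-likeness of $<_{\mc T}$ ensure that $b'(w)$ is independent of the chosen $u$ and that $b'(w_1) <_{\mc T} b'(w_2)$ whenever $w_1 <_\Lambda w_2$, so $b'$ is the desired cofinal branch through $\mc T$.
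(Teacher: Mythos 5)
There is a genuine gap in the first step of your argument, and it is precisely where the substance of the proof lies. You want a cofinal $B \subseteq \Lambda$ and a single strongly compact $\kappa < \mu$ with $|T_u| < \kappa$ for all $u \in B$, and your fallback for the case where no such $B$ exists is to fix $v^*$ with $|T_u| = \mu$ for all $u \geq_\Lambda v^*$, pass to the set of extensions of some $t^* \in T_{v^*}$, and iterate. But this iteration has no termination mechanism: for a fixed $t^* \in T_{v^*}$ and any $v \geq_\Lambda v^*$, the set $\{t \in T_v \mid t \restriction v^* = t^*\}$ can again have size exactly $\mu$, and the dichotomy will land you back in the residual case. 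Nothing forces the levels to shrink, and in fact the ``all levels have size $\mu$'' situation is the generic one and cannot be eliminated by pruning. Your proposal thus never actually produces the narrow subsystem.

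The paper handles this by building the narrow subsystem differently: it does \emph{not} shrink $\Lambda$ to a cofinal $B$ on which the full levels $T_u$ are small, but instead chooses, for $u$ ranging over a cofinal $\Gamma \subseteq \Lambda$, \emph{small subsets} $S_u \subseteq T_u$ of size $< \mu_i$ (for a fixed $i < \theta$), retaining all of the coherence required for a system. Extracting these small $S_u$ is exactly where an elementary embedding $j$ witnessing $|\Lambda|$-strong compactness of $\mu_0$ enters: one finds a single node $t$ of $j(\mc T)$ sitting above all levels $j(u)$, observes that each $t \restriction j(u)$ appears with index $< j(\mu_{i_u})$ for some $i_u < \theta$, stabilizes $i_u = i$ on a cofinal set using $\mu^+$-directedness, and then pulls the resulting relation-on-small-index-sets back through $j$ by elementarity. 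That elementarity step is the engine of the construction, and it is not something a pigeonhole or pruning argument can replace. Your second step (applying Proposition~\ref{strongly_compact_prop} and lifting the branch via tree-likeness and clause (v) of the $\Lambda$-tree definition) is fine and matches the paper, but it is contingent on having the narrow subsystem, which you do not yet have.
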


\begin{proof}
  Let $\theta := \cf(\mu)$, and let $\langle \mu_i \mid i < \theta \rangle$ be an increasing sequence of 
  strongly compact cardinals, converging to $\mu$, with $\mu_0 > \theta$. Let $\mc T = \langle \langle T_u \mid 
  u \in \Lambda \rangle, <_{\mc T} \rangle$ be a $\Lambda$-tree with $|T_u| \leq \mu$ for all $u \in \Lambda$. 
  We first show that $\mc T$ has a narrow subsystem indexed by a cofinal subset of $\Lambda$.
  
  \begin{claim}
    There is a cofinal $\Gamma \subseteq \Lambda$ and, for each $u \in \Gamma$, a nonempty $S_u \subseteq T_u$ 
    such that $\mc S := \langle \langle S_u \mid u \in \Gamma \rangle, \{<_{\mc S}\} \rangle$ is a narrow 
    $\Gamma$-system, where $<_{\mc S}$ is the restriction of $<_{\mc T}$ to $\bigcup_{u \in \Gamma} S_u$.
  \end{claim}  
  
  \begin{proof}
    For all $u \in \Lambda$, enumerate $T_u$ as $\langle t^u_\eta \mid \eta < \mu \rangle$ (with repetitions, 
    if necessary). Fix an elementary embedding $j:V \rightarrow M$ witnessing that $\mu_0$ is 
    $|\Lambda|$-strongly compact. In particular, we have
    \begin{itemize}
      \item $\mathrm{crit}(j) = \mu_0$;
      \item $j(\mu_0) > |\Lambda|$;
      \item there is $W \in M$ such that $W \subseteq j(\Lambda)$, $|W|^M < j(\mu_0)$, and 
      $j``\Lambda \subseteq W$.
    \end{itemize}
    Let $j(\mc T) = \mc T' = \langle T'_v \mid v \in j(\Lambda) \rangle$. Since $|W|^M < j(\mu_0) < j(\mu^+)$ and 
    $j(\Lambda)$ is $j(\mu^+)$-directed in $M$, we can find $z \in j(\Lambda)$ such that $w <_{j(\Lambda)} 
    z$ for all $w \in W$; in particular, $j(u) <_{j(\Lambda)} z$ for all $u \in \Lambda$. Choose an arbitrary 
    $t \in T'_z$. For each $u \in \Lambda$, enumerate $T'_{j(u)}$ as $\langle (t')^{j(u)}_\eta \mid 
    \eta < j(\mu) \rangle$. For each $u \in \Lambda$, there is $i_u < \theta$ and $\eta_u < j(\mu_{i_u})$ 
    such that $(t')^{j(u)}_{\eta_u} <_{j(\mc T)} t$. Since $\Lambda$ is $\mu^+$-directed, we can find a fixed 
    $i < \theta$ and a cofinal $\Gamma \subseteq \Lambda$ such that $i_u = i$ for all $u \in \Gamma$. 
    Then, for all $u <_\Lambda v$, both in $\Gamma$, we have $(t')^{j(u)}_{\eta_u}, (t')^{j(v)}_{\eta_v} 
    <_{j(\mc T)} t$, and hence $ (t')^{j(u)}_{\eta_u} <_{j(\mc T)} (t')^{j(v)}_{\eta_v}$. In particular, 
    \[
      M \models \exists \eta, \xi < j(\mu_i) \left[ (t')^{j(u)}_\eta <_{j(\mc T)} 
      (t')^{j(v)}_\xi \right],
    \]
    as witnessed by $\eta = \eta_u$ and $\xi = \xi_v$. By elementarity, we have
    \[
      V \models \exists \eta, \xi < \mu_i \left[ t^u_\eta <_{\mc T} t^v_\xi \right].
    \]
    
    It is now readily verified that, if we let $S_u := \{t^u_\eta \mid \eta < \mu_i\}$ for all $u \in 
    \Gamma$, then $\mc S$ as in the statement of the claim is indeed a narrow $\Gamma$-system: clauses 
    (1)--(4) of Definition~\ref{system_def} are immediate, and clause (5) follows from the elementarity 
    argument in the previous paragraph.
  \end{proof}
  
  Let $\mc S$ be as given by the claim, and let $i < \theta$ be such that $\width(\mc S) < \mu_i$. Then we can apply 
  Proposition~\ref{strongly_compact_prop} with $\mu_i$ and $\Gamma$ in place of $\kappa$ and $\Lambda$, 
  respectively, to conclude that $\mc S$ has a cofinal branch, $b \subseteq S$. This readily gives rise to 
  a cofinal branch $b' \in \prod_{u \in \Lambda} T_u$ through $\mc T$: for each $u \in \Lambda$, find 
  $v \in \Gamma$ such that $u \leq_\Lambda v$ and $b \cap S_v \neq \emptyset$. 
  Let $s$ be the unique element of $b \cap S_v$, and then
  let $b'(u) := s \restriction u$. 
\end{proof}

\section{Subadditive colorings} \label{subbadditive_sec}

In this brief section, we highlight a connection between the narrow system properties introduced 
in the previous section and the existence of certain strongly unbounded subadditive colorings 
on arbitrary directed partial orders. Such colorings on \emph{ordinals} have been extensively studied 
and have proven to be useful in a variety of contexts (cf.\ \cite{knaster_iii}). Here we generalize the 
notion to arbitrary directed orders, show that instances of the narrow system property imply 
the nonexistence of certain strongly unbounded subadditive colorings, and then show that the 
nonexistence of certain strongly unbounded subadditive colorings can replace the narrow system property 
hypothesis in the statement of Theorem A.

\begin{definition}
  Suppose that $\Lambda$ is a directed partial order and $\theta$ is an infinite regular cardinal. 
  Let $c:\Lambda^{[2]} \rightarrow \theta$ be a function.
  \begin{enumerate}
    \item We say that $c$ is \emph{subadditive} if, for all triples $u <_\Lambda v <_\Lambda w$ from 
    $\Lambda$, we have
    \begin{enumerate}
      \item $c(u,w) \leq \max\{c(u,v), c(v,w)\}$; and
      \item $c(u,v) \leq \max\{c(u,w), c(v,w)\}$.
    \end{enumerate}
    \item We say that $c$ is \emph{strongly unbounded} if, for every cofinal subset $\Gamma \subseteq 
    \Lambda$, $c``\Gamma^{[2]}$ is unbounded in $\theta$.
  \end{enumerate}
\end{definition}

\begin{proposition}
  Suppose that $\Lambda$ is a directed partial order, $\theta$ is an infinite regular cardinal, and 
  $c:\Lambda^{[2]} \rightarrow \theta$ is a strongly unbounded subadditive function. Then there 
  is a $\Lambda$-system with width $\theta$ and no cofinal branch.
\end{proposition}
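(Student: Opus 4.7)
The plan is to build the system explicitly from $c$, using copies of $\theta$ as the levels and using the color classes of $c$ to define a family of relations indexed by $\theta$.

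Concretely, I would set $S_u := \{u\} \times \theta$ for each $u \in \Lambda$, and take $\mc R := \{R_i \mid i < \theta\}$ where
\[
  (u,\alpha) <_{R_i} (v,\beta) \iff u <_\Lambda v, \ \alpha = \beta, \text{ and } c(u,v) \leq i.
\]
Transitivity of each $R_i$ follows from the first clause of subadditivity: if $c(u,v), c(v,w) \leq i$ then $c(u,w) \leq \max\{c(u,v), c(v,w)\} \leq i$. The tree-like property (Clause (4) of Definition~\ref{system_def}) follows from the second clause of subadditivity: if $(x,\alpha), (y,\beta) <_{R_i} (z,\gamma)$ and $\ell(x,\alpha) <_\Lambda \ell(y,\beta)$, then $\alpha=\beta=\gamma$, and $c(x,y) \leq \max\{c(x,z),c(y,z)\} \leq i$, giving $(x,\alpha) <_{R_i} (y,\beta)$. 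Clause (5) is immediate: given $(u,v) \in \Lambda^{[2]}$, setting $i := c(u,v)$ yields $(u,0) <_{R_i} (v,0)$. So $\mc S := \langle \langle S_u \mid u \in \Lambda \rangle, \mc R \rangle$ is a $\Lambda$-system of width $\theta$ (each $|S_u|=\theta$ and $|\mc R|=\theta$).

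It remains to show that $\mc S$ admits no cofinal branch. Suppose toward a contradiction that $b$ is a cofinal branch through some $R_i$, and set $\Gamma := \{u \in \Lambda \mid b \cap S_u \neq \emptyset\}$, which is cofinal in $\Lambda$. For each $u \in \Gamma$, let $(u,\alpha_u)$ denote the unique element of $b \cap S_u$. Given $u <_\Lambda v$ in $\Gamma$, the pair $(u,\alpha_u)$ and $(v,\alpha_v)$ is $R_i$-compatible and $\ell$-comparable, so by the tree-like clause applied to a common upper bound we get $(u,\alpha_u) <_{R_i} (v,\alpha_v)$, and in particular $c(u,v) \leq i$. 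Therefore $c``\Gamma^{[2]} \subseteq i+1$, contradicting the strong unboundedness of $c$.

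The only mild subtlety is the verification of the tree-like property and transitivity, both of which reduce cleanly to the two clauses of subadditivity; once the relations $R_i$ are set up correctly there is no real obstacle. I would expect the bulk of the write-up to be the routine verification that $\mc S$ satisfies clauses (1)--(5) of Definition~\ref{system_def}, with the branchlessness argument being the conceptual punchline.
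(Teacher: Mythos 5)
Your proof is correct and uses essentially the same idea as the paper: encode the color thresholds of $c$ into the system so that a cofinal branch would yield a cofinal $\Gamma \subseteq \Lambda$ with $c``\Gamma^{[2]}$ bounded. The paper packages this with a \emph{single} relation, setting $(u,i) <_R (v,j)$ iff $i=j$ and $c(u,v) \leq i$, so the second coordinate of a node plays the role of the threshold; you instead index a family $\{R_i \mid i < \theta\}$ by the threshold and additionally impose $\alpha = \beta$, which makes the second coordinate and the relation index both carry threshold information, a mild redundancy (you could have taken each $S_u$ to be a singleton and retained only the $R_i$'s, or equivalently followed the paper's single-relation encoding). Both variants verify clauses (1)--(5) of Definition~\ref{system_def} from the two halves of subadditivity in exactly the same way, and the branchlessness argument is identical, so the difference is purely one of bookkeeping rather than substance.
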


\begin{proof}
  We will define a $\Lambda$-system $\mc S = \left \langle \langle S_u \mid u \in \Lambda \rangle, 
  \mc R \right \rangle$. First, for each $u \in \Lambda$, let $S_u := \{u\} \times \theta$, and let 
  $\mc R = \{R\}$ consist of a single relation. Now, for $(u,v) \in \Lambda^{[2]}$ and 
  $i,j < \theta$, set $(u,i) <_R (v,j)$ if and only if $i = j$ and $c(u,v) \leq i$. The fact that 
  $\mc S$ is a $\Lambda$-system follows from the subadditivity of $c$, and it is evident that 
  $\width(\mc S) = \theta$. Now suppose for sake of contradiction that $\mc S$ has a cofinal branch, 
  $b$. Then $b$ is necessarily of the form $\{(u,i) \mid u \in \Gamma\}$ for some fixed 
  $i < \theta$ and some cofinal $\Gamma \subseteq \Lambda$. But then $c``\Gamma^{[2]} 
  \subseteq i+1$, contradicting the fact that $c$ is strongly unbounded.
\end{proof}

\begin{corollary} \label{nsp_sub_cor}
  Suppose that $\Lambda$ is a directed partial order and $\NSP(\Lambda)$ holds. Then, for every 
  infinite regular cardinal $\theta$ with $\theta^+ < d_\Lambda$, there does not exist a strongly 
  unbounded subadditive coloring $c:\Lambda^{[2]} \rightarrow \theta$. \qed 
\end{corollary}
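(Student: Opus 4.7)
The plan is to derive the corollary directly from the preceding proposition by contraposition. Suppose for the sake of contradiction that $\theta$ is an infinite regular cardinal with $\theta^+ < d_\Lambda$ and that $c : \Lambda^{[2]} \to \theta$ is a strongly unbounded subadditive coloring. Applying the preceding proposition to $c$ produces a $\Lambda$-system $\mc S$ with $\width(\mc S) = \theta$ and no cofinal branch.

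The key observation is then to note that this $\mc S$ is actually \emph{narrow}: by assumption $\width(\mc S)^+ = \theta^+ < d_\Lambda$, which is precisely the definition of narrowness for a $\Lambda$-system. Hence $\NSP(\Lambda)$ applies and guarantees a cofinal branch through $\mc S$, directly contradicting the conclusion of the proposition.

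There is no genuine obstacle here, and no real calculation to perform; the statement is just the contrapositive packaging of the previous proposition under the additional narrowness bookkeeping on $\theta$. The only point worth flagging explicitly in the write-up is that the hypothesis $\theta^+ < d_\Lambda$ (rather than just $\theta < d_\Lambda$) is exactly what is needed to convert ``width $\theta$'' into ``narrow,'' matching the $\width(\mc S)^+ < d_\Lambda$ clause in the definition of narrow $\Lambda$-system.
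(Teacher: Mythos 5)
Your proof is correct and is exactly the argument the paper has in mind; the corollary is stated with a \qed precisely because it is the immediate contrapositive of the preceding proposition, with the hypothesis $\theta^+ < d_\Lambda$ serving only to certify that the width-$\theta$ system produced there is narrow.
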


We now show that the nonexistence of strongly unbounded subadditive colorings from 
$(\power_\kappa \mu^+)^{[2]}$ to $\cf(\mu)$ can be used in place of $\cNSP(\power_\kappa \mu^+)$
to yield the conclusion of Theorem~\ref{nsp_cp_thm}.

\begin{theorem} \label{sub_cp_thm}
	Suppose that $\kappa < \mu$ are infinite cardinals such that
	\begin{itemize}
		\item $\cf(\mu) < \kappa$; and
		\item there does not exist a strongly unbounded subadditive coloring 
		\[
		  c:(\power_\kappa \mu^+)^{[2]} \rightarrow \cf(\mu).
		\]
	\end{itemize}
	Then $\CP(\mc D)$ holds for every uniform, transitive $\cf(\mu)$-covering matrix 
	for $\mu^+$.
\end{theorem}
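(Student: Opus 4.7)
The plan is to mimic the proof of Theorem~\ref{nsp_cp_thm}, but with the application of $\cNSP(\power_\kappa \mu^+)$ there replaced by a direct construction of a subadditive coloring from the data of $\mc D$; the hypothesis that no such coloring is strongly unbounded will then automatically supply a witness to $\CP(\mc D)$.

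Let $\theta := \cf(\mu)$ and fix a uniform, transitive $\theta$-covering matrix $\mc D$ for $\mu^+$; for each $x \in \power_\kappa \mu^+$, let $\gamma_x < \mu^+$ be the least ordinal given by Theorem~\ref{downward_coherence_thm}, and note as in the proof of Theorem~\ref{nsp_cp_thm} that $x \subseteq y$ implies $\gamma_x \leq \gamma_y$, while $x \subseteq \ssup(x) \leq \gamma_x = \bigcup_{i<\theta} D(i, \gamma_x)$. Define $c : (\power_\kappa \mu^+)^{[2]} \to \theta$ by letting $c(x,y)$ be the least $i < \theta$ such that $x \cap D(j, \gamma_y) = x \cap D(j, \gamma_x)$ for all $j \in [i, \theta)$; that this is well-defined is immediate from Theorem~\ref{downward_coherence_thm} applied with $\beta = \gamma_y$. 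I would then verify that $c$ is subadditive: for $x \subsetneq y \subsetneq z$, both subadditivity inequalities reduce to routine computations that exploit $x \subseteq y$ to insert an intersection with $y$ and then apply the defining coherence condition twice; for instance, for $j \geq \max\{c(x,y), c(y,z)\}$,
\[
  x \cap D(j, \gamma_z) = x \cap (y \cap D(j, \gamma_z)) = x \cap (y \cap D(j, \gamma_y)) = x \cap D(j, \gamma_y) = x \cap D(j, \gamma_x),
\]
yielding $c(x,z) \leq \max\{c(x,y), c(y,z)\}$, and the other direction is symmetric.

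By the hypothesis of the theorem, this subadditive $c$ cannot be strongly unbounded, so I fix a cofinal $\Gamma \subseteq \power_\kappa \mu^+$ and an $i^* < \theta$ with $c``\Gamma^{[2]} \subseteq i^*$, and for each $i \in [i^*, \theta)$ set $b_i := \bigcup_{x \in \Gamma} (x \cap D(i, \gamma_x))$. Arguing as in the final paragraph of the proof of Theorem~\ref{nsp_cp_thm}---splitting on the $\subseteq$-relationship between a given $y \in \Gamma$ and $x$ and, for incomparable pairs, passing to a common $\subseteq$-upper bound in $\Gamma$---one checks that $b_i \cap x = x \cap D(i, \gamma_x)$ for every $x \in \Gamma$, so every $z \in [b_i]^\theta$ (being contained in $b_i \cap x$ for some $x \in \Gamma$ with $z \subseteq x$) sits inside some $D(i, \gamma_x)$; thus $[b_i]^\theta$ is covered by $\mc D$.

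The step that I expect to require a bit more care is producing a single $i$ for which $b_i$ is \emph{unbounded} in $\mu^+$, since no individual $b_i$ need be. Here the key observation is that, because $\Gamma$ is cofinal in $\power_\kappa \mu^+$ and $x \subseteq \gamma_x = \bigcup_{i < \theta} D(i, \gamma_x)$ for each $x$,
\[
  \bigcup_{i \in [i^*, \theta)} b_i = \bigcup_{x \in \Gamma} x = \mu^+,
\]
and since $\theta < \mu < \mu^+$ while $\mu^+$ is regular, not all of these fewer than $\mu^+$ many $b_i$'s can be bounded in $\mu^+$. Any unbounded $b_i$ then witnesses $\CP(\mc D)$, completing the argument.
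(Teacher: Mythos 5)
Your proposal is correct and follows essentially the same route as the paper's proof: you build the same subadditive coloring $c$ (your definition $c(x,y) = $ least $i$ with $x \cap D(j,\gamma_y) = x \cap D(j,\gamma_x)$ for $j \geq i$ is literally the same as the paper's $x_j = y_j \cap x$, since $y_j \cap x = x \cap D(j,\gamma_y)$ when $x \subseteq y$), verify subadditivity by the same two-step intersection computation, extract a cofinal homogeneous $\Gamma$ and threshold $i^*$, define the candidate sets $b_i = \bigcup_{x \in \Gamma} x \cap D(i,\gamma_x)$, and check $b_i \cap x = x \cap D(i,\gamma_x)$ exactly as in the paper. The only point of divergence is cosmetic: to produce an unbounded $b_i$, the paper assumes toward a contradiction that every $b_i$ is bounded by some $\beta_i$, takes $\beta = \sup_i \beta_i$, finds $x \in \Gamma$ containing $\beta$, and derives a contradiction from $\beta \in x_j \subseteq b_j$ for large $j$; you instead observe that $\bigcup_{i \geq i^*} b_i = \bigcup_{x \in \Gamma} x = \mu^+$ and invoke regularity of $\mu^+$ together with $\theta < \mu^+$. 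Both arguments are equally elementary, and the counting version you give is, if anything, slightly cleaner.
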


\begin{proof}
	Let $\theta := \cf(\mu)$ and $\lambda := \mu^+$, and let 
	$\mc D = \langle D(i, \beta) \mid i < \theta, ~ \beta < \lambda^+ \rangle$ be a uniform, 
	transitive $\theta$-covering matrix for $\lambda$. By Theorem~\ref{downward_coherence_thm}, 
	$\mc D$ has the property that, for every 
	$x \in \mathscr P_\kappa \lambda$, there is $\gamma_x < \lambda$ such that, for all 
	$\beta \in [\gamma_x, \lambda)$, there is $i < \theta$ such that, for all 
	$j \in [i, \theta)$, we have $x \cap D(j, \beta) = x \cap D(j, \gamma_x)$.
	For each $x \in \mathscr P_\kappa \lambda$ and each $j < \theta$, let 
	$x_j := x \cap D(j, \gamma_x)$. Note that $x = \bigcup_{j < \theta} x_j$.
	
	\begin{claim} \label{agreement_claim}
		For all $(x,y) \in (\mathscr P_\kappa \lambda)^{[2]}$, there is $i < \theta$ such that, 
		for all $j \in [i, \theta)$, we have $x_j = y_j \cap x$.
	\end{claim}

	\begin{proof}
		Fix $(x,y) \in (\mathscr P_\kappa \lambda)^{[2]}$, and let $\gamma := \max\{\gamma_x, 
		\gamma_y\}$. Then, by definition of $x_j$ and $y_j$ and the choice of $\gamma_x$ and 
		$\gamma_y$, there is $i < \theta$ such that, for all 
		$j \in [i, \theta)$, we have $x_j = x \cap D(j, \gamma)$ and 
		$y_j = y \cap D(j, \gamma)$. But then, for all $j \in [i, \theta)$, we have 
		$y_j \cap x = D(j, \gamma) \cap x = x_j$, as desired.
	\end{proof}

	Now define a function $c:(\power_\kappa \lambda)^{[2]} \rightarrow \theta$ by letting $c(x,y)$ be the least 
	$i < \theta$ as in Claim \ref{agreement_claim} for all 
	$(x,y) \in (\mathscr P_\kappa \lambda)^{[2]}$.
	
	\begin{claim}
		$c$ is subadditive.
	\end{claim}

	\begin{proof}
		Fix $x \subsetneq y \subsetneq z$ in $\mathscr P_\kappa\lambda$, and fix $j < \theta$. 
		First, if $j \geq \max\{c(x,y), c(y,z)\}$, then $z_j \cap y = y_j$ and $y_j \cap x = x_j$. 
		It follows that $z_j \cap x = x_j$, and from this we can conclude that 
		$c(x,z) \leq \max\{c(x,y), c(y,z)\}$.
		
		Second, if $j \geq \max\{c(x,z), c(y,z)\}$, then $z_j \cap y = y_j$ and $z_j \cap x = x_j$. 
		It then follows that $y_j \cap x = (z_j \cap y) \cap x = z_j \cap x = x_j$, and again we 
		can conclude that $c(x,y) \leq \max\{c(x,z), c(y,z)\}$. Therefore, $c$ is subadditive.
	\end{proof}

	By assumption, $c$ cannot be strongly unbounded. Therefore, there is a $\subseteq$-cofinal 
	$X \subseteq \mathscr P_\kappa \lambda$ and an $i < \theta$ such that $c(x,y) \leq i$ for all 
	$x \subsetneq y$ in $X$.
	
	\begin{claim}
		For all $j \in [i, \theta)$ and all $x, y \in X$, we have $x_j \cap y = y_j \cap x$.
	\end{claim}

	\begin{proof}
		Fix such $j$, $x$, and $y$, and find $z \in X$ such that $x \cup y \subseteq z$. 
		Then $x_j = z_j \cap x$ and $y_j = z_j \cap y$, so $x_j \cap y = (z_j \cap x) \cap y = 
		(z_j \cap y) \cap x = y_j \cap x$. 
	\end{proof}

	For all $j \in [i, \theta)$, let $A_j = \bigcup_{x \in X} x_j$. It follows immediately from 
	the previous claim that, for all $x \in X$, we have $A_j \cap x = x_j$.
	
	\begin{claim}
		There is $j \in [i, \theta)$ such that $A_j$ is unbounded in $\lambda$.
	\end{claim}

	\begin{proof}
		If not, then, for every $j \in [i, \theta)$, there would be $\beta_j < \lambda$ such that 
		$A_j \subseteq \beta_j$. Let $\beta := \sup\{\beta_j \mid j \in [i, \theta)\} < \lambda$, 
		and find $x \in X$ such that $\beta \in x$. Then, for all large enough $j < \theta$, 
		we must have $\beta \in x_j$ and hence $\beta \in A_j$, contradicting the fact that 
		$A_j \subseteq \beta_j \subseteq \beta$.
	\end{proof}

	Fix $j \in [i, \theta)$ such that $A_j$ is unbounded in $\lambda$. We claim that 
	$A_j$ witnesses $\CP(\mc D)$. To this end, fix $w \in [A_j]^\theta$. Let 
	$x \in X$ be such that $w \subseteq x$. Then $w \subseteq A_j \cap x = x_j 
	\subseteq D(j, \gamma_x)$, so $[A_j]^\theta$ is indeed covered by $\mc D$, as desired.
\end{proof}

\begin{corollary} \label{sub_ssh_cor}
  Suppose that $\kappa \geq \omega_2$ is a regular cardinal and, for every singular cardinal 
  $\mu > \kappa$ of countable cofinality, there does not exist a strongly unbounded subadditive 
  coloring $c:(\power_\kappa \mu^+)^{[2]} \rightarrow \omega$. Then $\SSH$ holds above $\kappa$.
\end{corollary}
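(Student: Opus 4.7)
The plan is to run essentially the same argument as in the proof of Theorem~A, only replacing the appeal to Theorem~\ref{nsp_cp_thm} by an appeal to Theorem~\ref{sub_cp_thm}. More precisely, I will reduce $\SSH$ above $\kappa$ to the statement that $\pp(\mu) = \mu^+$ for every singular $\mu > \kappa$ of countable cofinality, and then use a suitable covering matrix together with the hypothesis to drive a contradiction with $\pp(\mu) > \mu^+$.

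First, by Fact~\ref{pp_fact}(1), it is enough to prove that $\pp(\mu) = \mu^+$ for every singular $\mu > \kappa$ with $\cf(\mu) = \omega$: for singular cardinals of uncountable cofinality one then deduces $\pp(\mu) = \mu^+$ by the usual induction, since the set of $\nu < \mu$ with $\pp(\nu) = \nu^+$ contains all successors of singulars of countable cofinality above $\kappa$ and is thus stationary in $\mu$. So fix a singular cardinal $\mu > \kappa$ of countable cofinality, and suppose toward contradiction that $\pp(\mu) > \mu^+$. By Fact~\ref{pp_fact}(2) there is an increasing sequence $\vec{\mu} = \langle \mu_i \mid i < \omega \rangle$ of regular cardinals converging to $\mu$ with $\cf(\prod \vec{\mu}, <^*) > \mu^+$.

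Next, just as in the proof of Theorem~A, by the argument of \cite[Lemma~2.4]{sharon_viale} (cf.\ also \cite[Lemma~4.4]{arithmetic_paper}) we can fix a uniform, transitive $\omega$-covering matrix $\mc D = \langle D(i,\beta) \mid i < \omega,~\beta < \mu^+ \rangle$ for $\mu^+$ such that $|D(i,\beta)| < \mu_i$ for all $i < \omega$ and $\beta < \mu^+$. Since $\kappa \geq \omega_2$, we have $\cf(\mu) = \omega < \kappa < \mu$, so the hypotheses of Theorem~\ref{sub_cp_thm} apply: the assumption of the corollary says that there is no strongly unbounded subadditive coloring $c:(\power_\kappa \mu^+)^{[2]} \rightarrow \omega$, and $\omega = \cf(\mu)$. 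Therefore $\CP(\mc D)$ holds.

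Finally, Theorem~\ref{cp_ssh_thm} applied to $\vec{\mu}$ and $\mc D$ gives $\cf(\prod \vec{\mu}, <^*) = \mu^+$, contradicting our choice of $\vec{\mu}$. Hence $\pp(\mu) = \mu^+$, completing the proof. The only nonroutine point is verifying that the cardinal arithmetic reductions and the covering-matrix construction of \cite{sharon_viale}/\cite{arithmetic_paper} go through under the present hypotheses, but both are essentially quoted verbatim from the proof of Theorem~A, so no real obstacle arises.
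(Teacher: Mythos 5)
Your proof is correct and takes essentially the same route as the paper's: the paper simply applies Theorem~\ref{sub_cp_thm} to get $\CP(\mc D)$ for every relevant covering matrix and then says ``$\SSH$ above $\kappa$ follows exactly as in the proof of Theorem~A,'' and you have unfolded that proof explicitly with Theorem~\ref{sub_cp_thm} in place of Theorem~\ref{nsp_cp_thm}. One small slip in your parenthetical explanation of Fact~\ref{pp_fact}(1): you write that $\{\nu < \mu \mid \pp(\nu) = \nu^+\}$ ``contains all \emph{successors} of singulars of countable cofinality above $\kappa$,'' but successors of singulars are regular; you mean it contains all \emph{singular} cardinals of countable cofinality in $(\kappa,\mu)$, which is a stationary subset of $\mu$ when $\cf(\mu)>\omega$.
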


\begin{proof}
  By Theorem \ref{sub_cp_thm}, the hypothesis implies that, for every singular cardinal $\mu > \kappa$ 
  of countable cofinality and every uniform, transitive $\omega$-covering matrix $\mc D$ for $\mu^+$, 
  we have $\CP(\mc D)$. Then $\SSH$ above $\kappa$ follows exactly as in proof of Theorem A at the 
  end of Section \ref{concrete_sec}.
\end{proof}

\section{A preservation lemma} \label{preservation_sec}

The remainder of the paper is dedicated to the proof of Theorem B, our global consistency result. 
In this section, we prove a technical preservation lemma indicating that if a sufficiently closed 
forcing adds a rich set of branches to a narrow $\Lambda$-system, then that system necessarily 
has a cofinal branch in the ground model. The lemma is a generalization of 
\cite[Lemma 4.3]{narrow_systems} (which itself is a slight improvement on a previous result of 
Sinapova \cite[Theorem 14]{sinapova_tp}) from the context of classical (ordinal-indexed) narrow systems 
to the context of narrow $\Lambda$-systems for arbitrary directed orders $\Lambda$. We first need a 
preliminary definition.

\begin{definition}
  Suppose that $\Lambda$ is a directed partial order and 
  \[
    \mc S = \left \langle \langle S_u \mid u \in \Lambda \rangle, \mc R \right \rangle
  \] 
  is a $\Lambda$-system with $\width(\mc S) = \theta$. Then a \emph{full set of branches in $\mc S$} is a set 
  $\{b_i \mid i < \theta\}$ such that
  \begin{itemize}
    \item for all $i < \theta$, $b_i$ is a branch in $\mc S$;
    \item for all $u \in \Lambda$, there is $i < \theta$ such that $b_i \cap S_u \neq \emptyset$.
  \end{itemize}
\end{definition}

\begin{proposition} \label{full_cofinal_prop}
  Suppose that $\Lambda$ is a directed partial order, 
  \[
    \mc S = \left \langle \langle S_u \mid u \in \Lambda \rangle, \mc R \right \rangle
  \]
  is a $\Lambda$-system with $\width(\mc S) = \theta < d_\Lambda$, and $\{b_i \mid i < \theta\}$ is 
  a full set of branches in $\mc S$. Then there is $i < \theta$ such that $b_i$ is a cofinal branch 
  in $\mc S$.
\end{proposition}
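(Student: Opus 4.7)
The plan is to proceed by a short proof by contradiction, exploiting the fact that $\theta$ branches can be simultaneously dominated in $\Lambda$ since $\theta < d_\Lambda$.

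Suppose for contradiction that none of the $b_i$ is a cofinal branch in $\mc S$. Then for each $i < \theta$, the set $U_i := \{u \in \Lambda \mid b_i \cap S_u \neq \emptyset\}$ fails to be cofinal in $\Lambda$, so I can pick $u_i \in \Lambda$ such that no $v \in U_i$ satisfies $u_i \leq_\Lambda v$; equivalently, for every $v \in \Lambda$ with $u_i \leq_\Lambda v$, we have $b_i \cap S_v = \emptyset$.

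Now the assumption $\theta < d_\Lambda$ means that $\Lambda$ is $\theta^+$-directed, so the family $\{u_i \mid i < \theta\} \in \power_{d_\Lambda} \Lambda$ admits a common upper bound $v^* \in \Lambda$, i.e., $u_i \leq_\Lambda v^*$ for every $i < \theta$. By choice of $u_i$, this forces $b_i \cap S_{v^*} = \emptyset$ for every $i < \theta$. But this directly contradicts the fullness of $\{b_i \mid i < \theta\}$, which requires the existence of some $i < \theta$ with $b_i \cap S_{v^*} \neq \emptyset$. Hence some $b_i$ must already be cofinal.

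There is essentially no obstacle here: the only subtlety is to correctly unpack ``not cofinal'' as the existence of a witness $u_i$ that no element of $U_i$ lies above, and then to note that $\theta^+$-directedness of $\Lambda$ is exactly what is needed to merge the $\theta$ many witnesses into a single $v^*$ that kills every branch at once.
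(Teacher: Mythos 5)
Your proof is correct and takes essentially the same approach as the paper: negate cofinality of each $b_i$ to get a witness $u_i$, use $\theta < d_\Lambda$ to find a single upper bound dominating all the $u_i$, and contradict fullness. The only cosmetic difference is that the paper phrases the witness property in terms of the strict upper cone $u_i^\uparrow$ and so must additionally invoke the standing assumption that $\Lambda$ has no maximal element to find a level above $u^*$, whereas you state the witness property with $\leq_\Lambda$, which lets you contradict fullness directly at $v^*$; both are fine.
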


\begin{proof}
  Suppose not. Then, for every $i < \theta$, there is $u_i \in \Lambda$ such that $b_i \cap S_v = 
  \emptyset$ for all $v \in u_i^\uparrow$. Since $d_\Lambda > \theta$, we can find $u^* \in \Lambda$ 
  such that $u_i \leq_{\Lambda} u^*$ for all $i < \theta$. Since $\Lambda$ has no maximal element, 
  $(u^*)^\uparrow \neq \emptyset$. However, for all $v \in (u^*)^\uparrow$ and all $i < \theta$, 
  we have $b_i \cap S_v = \emptyset$, contradicting the fact that $\{b_i \mid i < \theta\}$ is a full 
  set of branches.
\end{proof}

We are now ready for the main preservation lemma.

\begin{lemma} \label{preservation_lemma}
  Suppose that $\Lambda$ is a directed partial order, $\mc S$ is a narrow $\Lambda$-system, 
  $\theta = \width(S)$, $\P$ is a $\theta^+$-closed forcing poset, and 
  \[
    \Vdash_{\P}``\text{there is a full set of branches in } \mc S".
  \]
  Then, in $V$, there is a cofinal branch in $\mc S$.
\end{lemma}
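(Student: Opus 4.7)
The plan is to begin with three standard reductions. By Proposition \ref{single_relation_prop} I may assume $\mc S$ carries a single relation $R$; by Proposition \ref{finite_width_prop} I may assume $\theta := \width(\mc S)$ is infinite; and by Proposition \ref{full_cofinal_prop} it suffices to exhibit a cofinal branch through $\mc S$ in $V$. Fix a $\P$-name $\dot{\vec b} = \langle \dot b_i \mid i < \theta \rangle$ forced to be a full set of branches, and, for each $p \in \P$ and $i < \theta$, define in $V$ the ``ground trace''
\[
\hat b(p,i) := \{x \in S \mid p \Vdash x \in \dot b_i\}.
\]
The statement ``$x \parallel_R y$'' is $\Sigma_0$ and hence absolute; since $p$ forces any two elements of $\hat b(p,i)$ to lie jointly in the branch $\dot b_i$, they are $R$-compatible in $V$. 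Thus each $\hat b(p,i)$ is a branch of $\mc S$ in $V$, and the aim is to find $(p,i)$ for which $\hat b(p,i)$ is cofinal.

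Suppose, for contradiction, that no such $(p,i)$ exists. By the directed-pigeonhole argument from the proof of Proposition \ref{full_cofinal_prop} (which invokes $\theta < d_\Lambda$), even $\bigcup_{i < \theta} \hat b(p,i)$ meets only a non-cofinal set of levels for each $p$, so fix a ceiling $v(p) \in \Lambda$ above which none of the $\hat b(p,i)$ contribute. For each fixed $u^* \in \Lambda$, I would exploit the $\theta^+$-closure of $\P$ to build a decreasing sequence $\langle p_\alpha \mid \alpha < \theta^+ \rangle$ in $\P$ along with a $<_\Lambda$-increasing sequence $\langle w_\alpha \mid \alpha < \theta^+ \rangle$ in $(u^*)^\uparrow$ and witnesses $x_\alpha \in S_{w_\alpha}$, $i_\alpha < \theta$ such that $p_{\alpha+1} \Vdash x_\alpha \in \dot b_{i_\alpha}$. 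At successor stages, $w_\alpha$ may be chosen strictly above $v(p_\alpha)$ and above all previous $w_\beta$'s (possible since $\theta^+ < d_\Lambda$), and the full-set hypothesis below $p_\alpha$ supplies the extension; at limits below $\theta^+$ the $\theta^+$-closure yields the lower bound. A pigeonhole over $i_\alpha$ produces a single $i^{**}_{u^*} < \theta$ appearing cofinally in $\theta^+$, and the associated set $\{x_\alpha \mid i_\alpha = i^{**}_{u^*}\}$ is a pairwise $R$-compatible $V$-branch (via absoluteness applied to pairs jointly forced into $\dot b_{i^{**}_{u^*}}$ by later $p_\alpha$'s) that meets at least one level strictly above $u^*$; call this branch $B^{u^*}$.

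The main obstacle is that $B^{u^*}$ itself is not cofinal, since $|B^{u^*}| \leq \theta^+ < d_\Lambda$ forces its levels to be bounded in $\Lambda$. To turn the family $\{B^{u^*} \mid u^* \in \Lambda\}$ into a single cofinal $V$-branch, one pigeonholes first on the color $u^* \mapsto i^{**}_{u^*} \in \theta$ via the directed-pigeonhole principle to fix a value $i^{**}$ on a cofinal $\Gamma \subseteq \Lambda$, and then, exploiting that $|S_u| \leq \theta$ bounds the number of possible level values together with the absoluteness of $R$-compatibility, one extracts a coherent cofinal subfamily of $\{B^{u^*} \mid u^* \in \Gamma\}$ and amalgamates its elements into the desired $V$-branch. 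This final coordination, which requires choosing the witnessing conditions underlying the various $B^{u^*}$'s compatibly across $u^* \in \Gamma$, will be the most technically delicate step, and is where the arbitrary directed-order setting demands additional care compared to the classical proof of \cite[Lemma 4.3]{narrow_systems} for ordinal-indexed systems.
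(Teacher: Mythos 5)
Your opening reductions and the definition of the ground trace $\hat b(p,i)$ match the paper's setup, and the observation that each $\hat b(p,i)$ is a branch in $V$ (via absoluteness of $R$-compatibility once a single condition forces two elements into the same $\dot b_i$) is correct. But from there your argument diverges from the actual proof, and the divergence hides a genuine gap.

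The central obstacle you flag at the end — coordinating the local branches $B^{u^*}$ across different $u^*$ — is not merely ``technically delicate''; as outlined it cannot be made to work. Each $B^{u^*}$ is extracted from a $\theta^+$-decreasing sequence of conditions chosen freshly for that $u^*$, and conditions chosen for distinct $u^*$, $w^*$ will in general be mutually incompatible. The absoluteness argument that makes each individual $B^{u^*}$ a branch in $V$ requires a \emph{single} condition forcing both elements into $\dot b_i$; when $x \in B^{u^*}$ and $y \in B^{w^*}$ arise from incompatible conditions, nothing forces $x \parallel_R y$, even if $i^{**}_{u^*} = i^{**}_{w^*}$. Indeed, the whole point of a preservation lemma of this kind is that a $\theta^+$-closed forcing \emph{can} split a single name $\dot b_i$ along incompatible conditions into $R$-incompatible fragments, so a naive amalgamation of traces taken below incompatible conditions must fail in general.

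The paper's proof handles this with a fundamentally different device, which your proposal omits: a branch-splitting argument. One first proves (Claims \ref{splitting_claim_1}--\ref{splitting_claim_3}) that, under the contradiction hypothesis, below any $p \leq p^*$ and for each relevant $i$ one can find two extensions forcing $R_i$-\emph{in}compatible elements into $\dot b_i$, and moreover this splitting can be done simultaneously for all $i \in A$. Using $\theta^+$-closure one then builds a full binary tree of conditions $\langle p_\sigma \mid \sigma \in {}^{<\theta}2 \rangle$ of height $\theta$ with these splittings at every node, takes $\theta^+$ many branches $f \in B \subseteq {}^{\theta}2$, finds a single level $v \in \Lambda$ above all the splitting witnesses (here $\theta^+ < d_\Lambda$ is used), and then extends each $p_f$ to $q_f$ deciding some $x_f \in S_v$ and an index $i_f$. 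A pigeonhole over the $\theta$-many possible pairs $(i_f, x_f)$ versus $\theta^+$ branches yields $f \neq g$ with $i_f = i_g = i$ and $x_f = x_g$, which contradicts the forced incompatibility of the two elements the tree split at $\sigma = f \cap g$. Your single-linear-chain-per-$u^*$ construction never sets up the binary branching that makes this final pigeonhole bite, so the contradiction never materializes. You would need to reorganize the argument around the tree of conditions and the splitting claims to close the gap.
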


\begin{proof}
  Suppose for sake of contradiction that there is no cofinal branch in $\mc S$. By assumption, 
  we can fix $\P$-names $\{\dot{b}_i \mid i < \theta\}$ such that 
  \[
    \Vdash_{\P}``\{\dot{b}_i \mid i < \theta\} \text{ is a full set of branches in } \mc S".
  \]
  Using the $\theta^+$-closure of $\P$, construct a decreasing sequence $\langle p_i \mid i < \theta 
  \rangle$ of conditions in $\P$ such that, for each $i < \theta$:
  \begin{itemize}
    \item there is $R_i \in \mc R$ such that $p_i \Vdash_{\P} ``\dot{b}_i \text{ is a branch through } 
    R_i"$;
    \item $p_i$ decides the truth value of the statement $``\dot{b}_i \text{ is a cofinal branch in } 
    \mc S"$;
    \item if $p_i \Vdash_{\P} ``\dot{b}_i \text{ is not a cofinal branch}"$, then there is $u_i \in 
    \Lambda$ such that $p_i \Vdash_{\P} ``\forall v \in u^\uparrow ~ (\dot{b}_i \cap S_v = \emptyset)"$.
  \end{itemize}
  Again using the $\theta^+$-closure of $\P$, let $p^*$ be a lower bound for $\langle p_i \mid i < \theta 
  \rangle$. Let 
  \[
    A := \{i < \theta \mid p_i \Vdash_{\P}``\dot{b}_i \text{ is a cofinal branch}"\}
  \]
  and, using the fact that $d_\Lambda > \theta$, find a $u^* \in \Lambda$ such that $u_i \leq_\Lambda 
  u^*$ for all $i \in \theta \setminus A$. Note that, by Proposition \ref{full_cofinal_prop}, it must 
  be the case that $A \neq \emptyset$.
  
  \begin{claim} \label{splitting_claim_1}
    Suppose that $p \leq_{\P} p^*$ and  $i \in A$. Then there are $q_0, q_1 \leq_{\P} p$ and 
    $x_0, x_1 \in S$ such that
    \begin{enumerate}
      \item for $\varepsilon < 2$, $q_\varepsilon \Vdash_{\P} ``x_\varepsilon \in \dot{b}_i"$;
      \item $x_0 \perp_{R_i} x_1$.
    \end{enumerate}
  \end{claim}
  
  \begin{proof}
    Suppose not, and let $p$ and $i$ form a counterexample. Let 
    \[
      b := \{x \in S \mid \exists q \leq_{\P} p ~ (q \Vdash_{\P}``x \in \dot{b}_i)"\}.
    \]
    We claim that $b$ is a cofinal branch through $R_i$ in $\mc S$. Since $i \in A$ and 
    $p \leq_{\P} p_i$, it is immediate that $\{u \in \Lambda \mid b \cap S_u \neq \emptyset\}$ is cofinal 
    in $\Lambda$. Also, for all $x,y \in b$, our assumption that $p$ and $i$ form a counterexample 
    to the claim implies that $x \parallel_{R_i} y$. Thus, $b$ is a cofinal branch in $\mc S$, 
    contradicting our assumption that no such branch exists.
  \end{proof}
  
  \begin{claim} \label{splitting_claim_2}
    Suppose that $p_0, p_1 \leq_{\P} p^*$ and $i \in A$. Then there are $q_0 \leq_{\P} p_0$, 
    $q_1 \leq_{\P} p_1$, and $x_0, x_1 \in S$ such that
    \begin{enumerate}
      \item for $\varepsilon < 2$, $q_\varepsilon \Vdash_{\P} ``x_\varepsilon \in \dot{b}_i"$;
      \item $x_0 \perp_{R_i} x_1$. 
    \end{enumerate}
  \end{claim}
  
  \begin{proof}
    First apply Claim \ref{splitting_claim_1} to obtain $q_{0,0},q_{0,1} \leq_{\P} p_0$ and 
    $x_{0,0}, x_{0,1} \in S$ such that $q_{0, \varepsilon} \Vdash_{\P} ``x_{0, \varepsilon} \in 
    \dot{b}_i"$ for $\varepsilon < 2$ and $x_{0,0} \perp_{R_i} x_{0,1}$. Then find $q_1 \leq_{\P} p_1$ 
    and $x_1 \in S$ such that $\ell(x_{0,0}), \ell(x_{0,1}) \leq_{\Lambda} \ell(x_1)$ and 
    $q_1 \Vdash_{\P}``x_1 \in \dot{b}_i"$. It cannot be the case that $x_1$ is $R_i$-compatible with both 
    $x_{0,0}$ and $x_{0,1}$, as otherwise $x_1$ would witness that $x_{0,0}$ and $x_{0,1}$ are 
    $R_i$-compatible. Therefore, we can fix $\varepsilon < 2$ such that $x_{0,\varepsilon} \perp_{R_i} 
    x_1$. Let $q_0 := q_{0,\varepsilon}$ and $x_0 := x_{0, \varepsilon}$. Then $q_0$, $q_1$, $x_0$, and 
    $x_1$ are as desired.
  \end{proof}
  
  \begin{claim} \label{splitting_claim_3}
    Suppose that $p \leq p^*$. Then there are $q_0, q_1 \leq_{\P} p$ and 
    $\{x^i_\varepsilon \mid i \in A, ~ \varepsilon < 2\} \subseteq S$ such that
    \begin{enumerate}
      \item for every $i \in A$ and $\varepsilon < 2$, we have $q_\varepsilon \Vdash_{\P}``x^i_\varepsilon 
      \in \dot{b}_i"$;
      \item for every $i \in A$, we have $x^i_0 \perp_{R_i} x^i_1$.
    \end{enumerate}
  \end{claim}
  
  \begin{proof}
    We recursively build two decreasing sequences $\langle q_{0, i} \mid i < \theta \rangle$ and 
    $\langle q_{1, i} \mid i < \theta \rangle$ from $\P$, together with elements $\{x^i_\varepsilon 
    \mid i \in A, ~ \varepsilon < 2\}$ as follows.
    
    First, let $q_{0,0} = q_{1,0} = p$. If $j < \theta$ is a limit ordinal, $\varepsilon < 2$, and 
    we have defined $\langle q_{\varepsilon, i} \mid i < j \rangle$, then let $q_{\varepsilon, j}$ 
    be any lower bound for $\langle q_{\varepsilon, i} \mid i < j \rangle$. If $i \in \theta \setminus A$, 
    $\varepsilon < 2$, and $q_{\varepsilon, i}$ has been defined, then simply let 
    $q_{\varepsilon, i+1} = q_{\varepsilon, i}$. Finally, suppose that $i \in A$ and we have defined 
    $\langle q_{0, j} \mid j \leq i \rangle$ and $\langle q_{1,j} \mid j \leq i \rangle$. Then apply 
    Claim \ref{splitting_claim_2} to $q_{0,i}$, $q_{1,i}$, and $i$ to obtain
    $q_{0, i+1} \leq_{\P} q_{0,i}$, $q_{1, i+1} \leq_{\P} q_{1,i}$, and $x^i_0, x^i_1 \in S$ such 
    that
    \begin{itemize}
      \item for $\varepsilon < 2$, $q_{\varepsilon, i+1} \Vdash_{\P}``x^i_\varepsilon \in \dot{b}_i"$;
      \item $x^i_0 \perp_{R_i} x^i_1$.
    \end{itemize}
    At the end of the construction, for each $\varepsilon < 2$, let $q_\varepsilon$ be a lower bound 
    for $\langle q_{\varepsilon, i} \mid i < \theta \rangle$. Then $q_0$, $q_1$, and 
    $\{x^i_\varepsilon \mid i \in A, ~ \varepsilon < 2\}$ are as desired.
  \end{proof}
  
  Now use Claim \ref{splitting_claim_3} and the closure of $\P$ to recursively build a tree of conditions 
  $\{ p_\sigma \mid \sigma \in {^{<\theta}}2\}$ and elements $\{x^{\sigma, i}_\varepsilon \mid 
  \sigma \in {^{<\theta}}2, ~ i \in A, ~ \varepsilon < 2\}$ of $S$ as follows. We will maintain the 
  hypothesis that, for all $\tau, \sigma \in {^{<\theta}}2$, if $\tau$ is an initial segment of 
  $\sigma$, then $p_\sigma \leq_{\P} p_\tau$.
  
  Let $p_\emptyset := p^*$. If $\eta < \theta$ is a limit ordinal, $\sigma \in {^{\eta}}2$, and 
  $p_{\sigma \restriction \xi}$ has been defined for every $\xi < \eta$, then let $p_\sigma$ be any 
  lower bound for $\langle p_{\sigma \restriction \xi} \mid \xi < \eta \rangle$. If $\sigma \in 
  {^{<\theta}}2$ and $p_\sigma$ has been defined, then apply Claim \ref{splitting_claim_3} to find 
  $p_{\sigma ^\frown \langle 0 \rangle}, p_{\sigma ^\frown \langle 1 \rangle} \leq p_\sigma$, and 
  $\{x^{\sigma, i}_\varepsilon \mid i \in A, ~ \varepsilon < 2\} \subseteq S$ such that 
  \begin{enumerate}
    \item for every $i \in A$ and $\varepsilon < 2$, we have $p_{\sigma ^\frown \langle \varepsilon 
    \rangle} \Vdash_{\P}``x^{\sigma, i}_\varepsilon \in \dot{b}_i"$;
    \item for every $i \in A$, we have $x^{\sigma, i}_0 \perp_{R_i} x^{\sigma, i}_1$.
  \end{enumerate}
  For each $f \in {^{\theta}}2$, let $p_f$ be a lower bound for $\langle p_{f \restriction \eta} \mid 
  \eta < \theta \rangle$. Choose $B \subseteq {^{\theta}}2$ with $|B| = \theta^+$, and use the fact 
  that $d_{\Lambda} > \theta^+$ to find $v \in \Lambda$ such that 
  $\ell(x^{f \restriction \eta, i}_\varepsilon) <_{\Lambda} v$ for all $f \in B$, $\eta < \theta$, 
  $i \in A$, and $\varepsilon < 2$. We can also assume that $u^* <_\Lambda v$.
  
  For each $f \in B$, use the fact that $\{\dot{b}_i \mid i < \theta\}$ is forced to be a full set of 
  branches in $\mc S$ to find a $q_f \leq_\P p_f$, an $i_f < \theta$, and an $x_f \in S_v$ such 
  that $q_f \Vdash_{\P}``x_f \in \dot{b}_{i_f}"$. Since $u^* <_\Lambda v$ and each $q_f$ extends 
  $p^*$, it must be the case that $i_f \in A$ for all $f \in B$. Since $|B| = \theta^+ > \width(\mc S)$, 
  we can find distinct $f,g \in B$, $i \in A$, and $x \in S$ such that $i_f = i_g = i$ and 
  $x_f = x_g = x$. Let $\eta^* < \theta$ be 
  the least $\eta$ such that $f(\eta) \neq g(\eta)$, and let $\sigma := f \restriction \eta^* = 
  g \restriction \eta^*$. Without loss of generality, assume that $f(\eta^*) = 0$ and 
  $g(\eta^*) = 1$. Then $q_f \leq_{\P} q_{\sigma ^\frown \langle 0 \rangle}$, and therefore 
  $q_f \Vdash_{\P}``x^{\sigma, i}_0 \in \dot{b}_i"$. Similarly, $q_g \Vdash_{\P}``x^{\sigma, i}_1 
  \in \dot{b}_i"$. Since both $q_f$ and $q_g$ extend $p_i$ and force $x$ to be in $\dot{b}_i$, and since 
  $\ell(x^{\sigma, i}_0), \ell(x^{\sigma, i}_1) <_\Lambda v = \ell(x)$, it must be the case 
  that $x^{\sigma, i}_0 <_{R_i} x$ and $x^{\sigma, i}_1 <_{R_i} x$, contradicting the fact that 
  $x^{\sigma, i}_0 \perp_{R_i} x^{\sigma, i}_1$.
\end{proof}

\section{A global consistency result} \label{consistency_sec}

We are finally ready to prove our consistency result. For organizational reasons, it will be helpful 
to have the following definition.

\begin{definition}
  For every infinite regular cardinal $\kappa$, we say that $\kappa$ has the \emph{strong narrow system 
  property}, denoted $\SNSP_\kappa$, if, for every directed partial order $\Lambda$ with 
  $d_\Lambda \geq \kappa$, every $\Lambda$-system $\mc S$ with $\width(\mc{S})^+ < \kappa$ has a cofinal 
  branch.
\end{definition}

\begin{theorem} \label{single_collapse_theorem}
  Let $\mu < \kappa$ be regular uncountable cardinals, with $\kappa$ supercompact, and let $\P := 
  \mathrm{Coll}(\mu, {<}\kappa)$. Then, in $V^{\P}$, $\SNSP_\kappa$ holds and moreover is indestructible 
  under $\kappa$-directed closed set forcing.
\end{theorem}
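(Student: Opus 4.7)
The plan is to exhibit, for each narrow $\Lambda$-system $\mc S$ in the generic extension, a $\theta^+$-closed forcing (where $\theta = \width(\mc S)$) over that extension which adds a full set of branches through $\mc S$, and then to invoke Lemma~\ref{preservation_lemma} to pull a cofinal branch back to the extension itself. Fix $G$ generic for $\P$ over $V$, a $\kappa$-directed-closed poset $\Q \in V[G]$, and $H$ generic for $\Q$ over $V[G]$, together with a narrow $\Lambda$-system $\mc S = \langle \langle S_u \mid u \in \Lambda \rangle, \mc R \rangle$ in $V[G][H]$ of width $\theta$. Since $\theta^+ < d_\Lambda \leq \kappa = \mu^+$, we have $\theta^+ \leq \mu$, so any $\mu$-closed forcing is automatically $\theta^+$-closed. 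Taking $\Q$ to be trivial recovers $\SNSP_\kappa$ in $V^\P$ itself.

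In $V$, I choose $\chi$ larger than the sizes of the relevant $\P * \dot\Q$-names for $\Lambda$, $\mc R$, $\mc S$, and $\Q$, and fix a $\chi$-supercompactness embedding $j:V\to M$ with $\crit(j) = \kappa$, $j(\kappa) > \chi$, and $M^\chi \subseteq M$. The standard factorization $j(\P) \cong \P \times \Coll(\mu,[\kappa, j(\kappa)))^M$ lets me lift $j$ through $\P$ by forcing with $\R_1 := \Coll(\mu,[\kappa, j(\kappa)))$ over $V[G][H]$, producing $\hat j_0 : V[G] \to M[\hat G]$. Since $\hat j_0(\Q)$ is $j(\kappa)$-directed closed in $M[\hat G]$ and $\hat j_0 `` H$ is a directed subset of it of size $\leq \chi < j(\kappa)$, there is a master condition $q^* \in \hat j_0(\Q)$ below every element of $\hat j_0`` H$. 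Let $\dot\R_2$ name the subforcing of $\hat j_0(\Q)$ below $q^*$ and set $\R := \R_1 * \dot\R_2$; the two-step iteration $\R$ is $\mu$-closed in $V[G][H]$, and after forcing with it the lift extends to $\hat j : V[G][H] \to M[\hat G][\hat H]$.

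In $V[G][H]^{\R}$, I extract the required full set of branches. Since $\hat j`` \Lambda \in M[\hat G][\hat H]$ has size $\leq \chi < j(\kappa) \leq d_{j(\Lambda)}$, the $j(\kappa)$-directedness of $j(\Lambda)$ supplies an upper bound $v^* \in j(\Lambda)$ for $\hat j `` \Lambda$. For each $R \in \mc R$ and $x \in j(\mc S)_{v^*}$ define
\[
b^R_x := \{y \in S \mid \hat j(y) <_{\hat j(R)} x\}.
\]
Clause~\ref{treelike_clause} of Definition~\ref{system_def} applied in $M[\hat G][\hat H]$ to $\hat j(R)$, together with elementarity of $\hat j$, shows that each $b^R_x$ is a branch through $R$. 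Because $|\mc R|, |S_u| < \crit(j)$, the embedding satisfies $j(\mc R) = \{j(R) \mid R \in \mc R\}$ and $j(S_u) = \{j(y) \mid y \in S_u\}$, and $|j(\mc S)_{v^*}| \leq j(\theta) = \theta$, so the collection $\{b^R_x \mid R \in \mc R, \, x \in j(\mc S)_{v^*}\}$ has size at most $\theta$. Fullness follows by applying clause~\ref{completeness_clause} in $M[\hat G][\hat H]$ to the pair $(j(u), v^*) \in j(\Lambda)^{[2]}$ for each $u \in \Lambda$: this produces $R' \in j(\mc R)$, $y' \in j(\mc S)_{j(u)}$, and $x \in j(\mc S)_{v^*}$ with $y' <_{R'} x$, and then $R' = j(R)$ and $y' = j(y)$ for some $R \in \mc R$ and $y \in S_u$, giving $y \in b^R_x \cap S_u$. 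Lemma~\ref{preservation_lemma} then yields a cofinal branch in $\mc S$ already in $V[G][H]$.

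The main technical obstacle will be the careful setup of $\dot\R_2$ and the master-condition lift: specifically, verifying that $\hat j_0(\Q)$ and $\hat j_0``H$ are correctly computed inside $M[\hat G]$ (so that the $j(\kappa)$-directed-closure of $\hat j_0(\Q)$ there produces $q^*$ in $V[G][H]^{\R_1}$), and that $\R_1 * \dot\R_2$ is genuinely $\mu$-closed as a two-step iteration in $V[G][H]$. These points hinge on choosing $\chi$ sufficiently large relative to $|\dot\Q|^V$ and on the residual closure of $M[\hat G]$ inside the $\R_1$-extension, all of which follow standard patterns in the lifting theory for Levy collapses of supercompact cardinals and are exactly what is required to make the preservation-lemma argument go through.
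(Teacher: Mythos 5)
Your overall architecture matches the paper's: lift $j$ through the relevant forcings, use $j(\kappa)$-directedness of $j(\Lambda)$ to find an upper bound $v^*$ for $j``\Lambda$, define the branches by pulling back along $j$ from $j(\mc S)_{v^*}$, and invoke Lemma~\ref{preservation_lemma} over a $\mu$-closed quotient. The endgame (extracting the full set of branches and applying the preservation lemma) is correct and essentially identical to the paper's.

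However, there is a genuine gap at the master-condition step, and it is not one that can be waved away as ``standard.'' You factor $j(\P)$ as $\P \times \R_1$ with $\R_1 = \Coll(\mu, [\kappa, j(\kappa)))$ and force with $\R_1$ over $V[G][H]$ to obtain $\hat G := G \times G_{\R_1}$ and the lift $\hat j_0 : V[G] \to M[\hat G]$. To find $q^* \in \hat j_0(\Q)$ below every element of $\hat j_0``H$, you appeal to the $j(\kappa)$-directed-closure of $\hat j_0(\Q)$ in $M[\hat G]$. But directed closure of $\hat j_0(\Q)$ in $M[\hat G]$ only produces lower bounds for directed subsets that are \emph{elements of} $M[\hat G]$, and $\hat j_0``H \notin M[\hat G]$. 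Indeed, $M[\hat G] \subseteq V[G][G_{\R_1}]$, while $H$ is $\Q$-generic over $V[G]$ and mutually generic with $G_{\R_1}$, so $H \notin V[G][G_{\R_1}]$; and since $\hat j_0 \restriction \Q \in V[G][G_{\R_1}]$, membership of $\hat j_0``H$ in $V[G][G_{\R_1}]$ would let one recover $H$ there, a contradiction. No choice of $\chi$ or ``residual closure'' fixes this: the obstruction is that $H$ simply is not visible to $M[\hat G]$ at all, not that $M[\hat G]$ fails to be sufficiently closed inside a model that does contain $H$.

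The paper avoids this precisely by not forming a product: it invokes Magidor's lemma (\cite[Lemma 3]{magidor_reflecting}, cf.\ also \cite[Fact 6.11]{cfm}) to extend the embedding $\P \hookrightarrow j(\P)$ to a complete embedding of $\P * \dot\Q$ into $j(\P)$ with $\mu$-closed quotient $\dot\R$, so that $j(\P) \cong \P * \dot\Q * \dot\R$. Then the $j(\P)$-generic is $G * H * K$, which \emph{contains} $H$, hence $M[G][H][K]$ contains $H$ and (by the $\delta$-closure of $M$ in $V$ together with the fact that $\P*\dot\Q$ has size $< \delta$) contains $j``H$ as well, allowing the master condition to be formed inside $M[G][H][K]$. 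Your proof should be repaired by replacing the product factorization $\P \times \R_1$ with this absorption, after which the rest of your argument goes through as written.
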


\begin{proof}
  Let $G$ be $\P$-generic over $V$. Since trivial forcing is $\kappa$-directed closed, it suffices to 
  prove that, if $\Q$ is a $\kappa$-directed closed set forcing in $V[G]$ and $H$ is $\Q$-generic 
  over $V[G]$, then $\SNSP_\kappa$ holds in $V[G][H]$. 
  
  To this end, fix a $\kappa$-directed closed $\Q \in V[G]$ and a $\Q$-generic filter $H$ over 
  $V[G]$. In $V[G][H]$, let $\Lambda$ be a $\kappa$-directed partial order, and let $\mc S = 
  \left \langle \langle S_u \mid u \in \Lambda \rangle, \mc R \right \rangle$ be a 
  $\Lambda$-system with $\width(\mc S) < \mu$. For concreteness, assume that the underlying sets 
  of both $\Q$ and $\Lambda$ are ordinals. We will show that, in $V[G][H]$, there is a cofinal 
  branch in $\mc S$. By Proposition \ref{single_relation_prop}, we can 
  assume that $\mc S$ has a single relation, which we will denote by $R$.
  
  In $V$, let $\dot{\Q}$ be a $\P$-name for $\Q$, and let $\dot{\Lambda}$ be a $\P \ast \dot{\Q}$-name 
  for $\Lambda$. Fix a cardinal $\delta > \kappa$ such that $|\power(\P \ast \dot{\Q})| < \delta$ and 
  $\Vdash_{\P \ast \dot{\Q}}``|\dot{\Lambda}| < \delta"$, and let $j:V \rightarrow M$ be an 
  elementary embedding witnessing 
  that $\kappa$ is $\delta$-supercompact, i.e., $\mathrm{crit}(j) = \kappa$, $j(\kappa) > \delta$, 
  and ${^{\delta}}M \subseteq M$. We have $j(\P) = \mathrm{Coll}(\mu, {<}j(\kappa))$ so, by 
  \cite[Lemma 3]{magidor_reflecting} (cf.\ also \cite[Fact 6.11]{cfm}), 
  the natural complete embedding $\iota$ of $\P$ into $j(\P)$ can be extended to a complete embedding 
  $\iota'$ of $\P \ast \dot{\Q}$ into $j(\P)$ in such a way that the quotient forcing 
  $j(\P)/\iota'[\P \ast \dot{\Q}]$ is $\mu$-closed. Let $\dot{\R}$ be a $\P \ast \dot{\Q}$-name for 
  this quotient forcing. We then have $j(\P) \cong \P \ast \dot{\Q} \ast \dot{\R}$, and 
  $\dot{\R}$ is forced by $\P \ast \dot{\Q}$ to be $\mu$-closed. 
  
  Let $\R$ be the realization of $\dot{\R}$ in $V[G][H]$, and let $K$ be an $\R$-generic filter over 
  $V[G][H]$. Since, for all $p \in \P$, $j(p) = p$, which is naturally identified with 
  $(p, 1_{\dot{\Q}}, 1_{\dot{\R}})$ in $\P \ast \dot{\Q} \ast \dot{\R}$, we have $j``G \subseteq 
  G \ast H \ast K$, so, in $V[G][H][K]$, we can extend $j$ to 
  $j:V[G] \rightarrow M[G][H][K]$.
  
  By the closure of $M$, we know that $j``H \in M[G][H][K]$. Moreover,
  in that model, $j``H$ is a directed subset of $j(\Q)$ with $|j``H| < \delta < j(\kappa)$, and 
  $j(\Q)$ is $j(\kappa)$-directed closed. We can therefore find $q^* \in j(\Q)$ such that 
  $q^* \leq_{j(\Q)} j(q)$ for all $q \in H$. Let $H^+$ be $j(\Q)$-generic over $V[G][H][K]$ with 
  $q^* \in H^+$. Then $j``H \subseteq H^+$, so, in $V[G][H][K][H^+]$, we can extend $j$ one last time 
  to $j:V[G][H] \rightarrow M[G][H][K][H^+]$. 
  
  Again by the closure of $M$, we have $j``\Lambda \in M[G][H][K][H^+]$. Moreover, in that model, 
  $j``\Lambda$ is a subset of $j(\Lambda)$ with $|j``\Lambda| < \delta < j(\kappa)$, and 
  $j(\Lambda)$ is $j(\kappa)$-directed. We can therefore find $v^* \in j(\Lambda)$ such that 
  $j(u) <_{j(\Lambda)} v^*$ for all $u \in \Lambda$. Let $\theta := \width(\mc S)$. Since 
  $\theta < \mu$, we have $\theta = j(\theta) = \width(j(\mc S))$. Write $j(\mc S)$ as 
  $\left \langle \langle S'_v \mid v \in j(\Lambda) \rangle, \{j(R)\} \right \rangle$.
  Enumerate $S'_{v^*}$ as $\langle y_i \mid i < \theta \rangle$, with repetitions if $|S'_{v^*}| < \theta$. 
  For each $i < \theta$, let $b_i := \{x \in S \mid j(x) <_{j(R)} y_i\}$.
  
  We claim that $\{b_i \mid i < \theta\}$ is a full set of branches in $\mc S$. Let us first 
  verify that each $b_i$ is a branch in $\mc S$. To this end, fix $i < \theta$ and 
  $x,y \in b_i$. Then, in $j(\mc S)$, we have 
  $j(x), j(y) <_{j(R)} y_i$, and hence $j(x) \parallel_{j(R)} j(y)$. By elementarity, 
  we have $x \parallel_{R} y$, as desired.
  
  We next verify that, for all $u \in \Lambda$, there is $i < \theta$ such that $b_i \cap S_u \neq 
  \emptyset$. To this end, fix $u \in \Lambda$. Since $j(u) <_{j(\Lambda)} v^*$, clause 
  \ref{completeness_clause} of Definition \ref{system_def} implies that there are $i < \theta$ and 
  $w \in S'_{j(u)}$ such that $w <_{j(R)} y_i$. Since $|S_u| \leq \theta < \kappa$, we have 
  $S'_{j(u)} = j``S_u$, so we can find $x \in S_u$ such that $j(x) = w$. Then $x \in b_i \cap S_u$.
  
  We have thus shown that $\{b_i \mid i < \theta\} \in V[G][H][K][H^+]$ is a full set of branches 
  in $\mc S$. Therefore, since $\theta^+ \leq \mu$, we can apply Lemma \ref{preservation_lemma} in 
  $V[G][H]$ to $\mc S$ and the $\mu$-closed poset $\R \ast j(\Q)$ to conclude that, in $V[G][H]$,
  there is a cofinal branch in $\mc S$, thus completing the proof of the theorem.
\end{proof}

\begin{remark} \label{cnsp_tp_remark}
  Theorem \ref{single_collapse_theorem} provides a way of verifying our earlier claim 
  from Remark \ref{remark_23} that, in general 
  $\cNSP_\kappa$ does not imply $\TP_\kappa$. For example, if 
  $\kappa$ is supercompact and $\P = \Coll(\omega_1, {<}\kappa)$, then, in $V^{\P}$, we have 
  $\kappa = \aleph_2$, and Theorem \ref{single_collapse_theorem} implies that $\cNSP_\kappa$ 
  holds. On the other hand, $\CH$ holds in $V^{\P}$, so there exists 
  an $\aleph_2$-Aronszajn tree, and hence even $\TP(\kappa, \kappa)$ fails.
\end{remark}

Note that the assertion ``$\SNSP_\kappa$ holds for every infinite regular cardinal $\kappa$" is equivalent 
to the assertion ``$\NSP(\Lambda)$ holds for every directed partial order $\Lambda$". The 
following therefore yields Theorem B.

\begin{theorem} \label{global_theorem}
  Suppose that there is a proper class of supercompact cardinals. Then there is a class forcing extension 
  in which $\SNSP_\kappa$ holds for every infinite regular cardinal $\kappa$.
\end{theorem}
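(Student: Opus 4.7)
The plan is to iterate Theorem \ref{single_collapse_theorem} along the class of supercompacts via a class-length reverse Easton iteration of L\'evy collapses. Enumerate the supercompact cardinals of $V$ in increasing order as $\langle \kappa_\alpha \mid \alpha \in \Ord \rangle$, and set $\mu_\alpha := (\sup_{\beta < \alpha} \kappa_\beta)^+$ (with $\sup\emptyset$ interpreted as $\omega$) whenever $\alpha$ is $0$ or a successor ordinal, while declaring stage $\alpha$ trivial at limit $\alpha$. Define the iteration $\P = \langle \P_\alpha, \dot{\Q}_\alpha \mid \alpha \in \Ord \rangle$ with Easton support, where $\dot{\Q}_\alpha$ names $\Coll(\mu_\alpha, {<}\kappa_\alpha)$ at each nontrivial stage, and let $G$ be $\P$-generic over $V$.

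The crucial reduction is that it suffices to verify $\SNSP_{\kappa_\alpha}$ in $V[G]$ for every $\alpha$ that is $0$ or a successor ordinal. Indeed, the regular cardinals of $V[G]$ are exactly $\omega$, $\omega_1$, the surviving $\kappa_\alpha$ for such $\alpha$, and the ordinals $\kappa_\gamma^+$ (as computed in $V[G]$) for limit $\gamma$, since every other ordinal in the corresponding intervals has been collapsed by one of the L\'evy factors. For $\lambda \in \{\omega, \omega_1\}$, $\SNSP_\lambda$ is automatic from Proposition \ref{finite_width_prop}. For $\lambda = \kappa_\gamma^+$ with $\gamma$ limit, any narrow $\Lambda$-system $\mc S$ with $d_\Lambda \geq \lambda$ satisfies $\width(\mc S)^+ < \kappa_\gamma$, hence $\width(\mc S)^+ < \kappa_\beta$ for some $\beta < \gamma$ that we may arrange to be a successor, and $\SNSP_{\kappa_\beta}$ (noting $d_\Lambda \geq \lambda > \kappa_\beta$) then delivers a cofinal branch through $\mc S$.

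For the main step, fix $\alpha$ that is $0$ or a successor and factor $\P$ as $\P_\alpha \ast \dot{\Q}_\alpha \ast \dot{\P}_{\mathrm{tail}}$. The nontrivial factors of $\P_\alpha$ each have size strictly below $\kappa_\alpha$, and the Easton-support structure keeps $|\P_\alpha| < \kappa_\alpha$, so a standard L\'evy--Solovay argument ensures that $\kappa_\alpha$ remains supercompact in $V^{\P_\alpha}$. Apply Theorem \ref{single_collapse_theorem} in $V^{\P_\alpha}$ to the pair $(\mu_\alpha, \kappa_\alpha)$: after forcing with $\dot{\Q}_\alpha = \Coll(\mu_\alpha, {<}\kappa_\alpha)$, $\SNSP_{\kappa_\alpha}$ holds and is indestructible under $\kappa_\alpha$-directed closed set forcing. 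Each subsequent factor $\Coll(\mu_\beta, {<}\kappa_\beta)$ for $\beta > \alpha$ has $\mu_\beta \geq \kappa_\alpha^+$ and is therefore $\kappa_\alpha^+$-directed closed, and a routine reverse Easton calculation shows that $\dot{\P}_{\mathrm{tail}}$ is forced to be $\kappa_\alpha$-directed closed in $V^{\P_\alpha \ast \dot{\Q}_\alpha}$. Therefore $\SNSP_{\kappa_\alpha}$ survives all the way to $V[G]$.

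The main technical difficulty lies not in any single forcing step but in the class-level bookkeeping: verifying that $\P$ preserves $\ZFC$, correctly handling inverse versus direct limits at limit stages, and in particular certifying that $\dot{\P}_{\mathrm{tail}}$ really is $\kappa_\alpha$-directed closed in the appropriate intermediate model. All of these issues are handled by classical reverse Easton machinery, and the indestructibility clause of Theorem \ref{single_collapse_theorem} is precisely what is needed to thread this machinery into the global consistency result.
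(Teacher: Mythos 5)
The proof proposal follows the same strategy as the paper — a class-length iteration of L\'evy collapses, with Theorem \ref{single_collapse_theorem} (together with its indestructibility clause) applied at each supercompact — but the bookkeeping is incorrect, and the stated list of regular cardinals that must be handled is incomplete. This is a genuine gap, not a cosmetic one.

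Concretely, with your enumeration $\langle \kappa_\alpha \mid \alpha \in \Ord\rangle$ of the supercompacts of $V$ (so $\kappa_\gamma$ for limit $\gamma$ is itself a supercompact, generally strictly above $\sup_{\beta<\gamma}\kappa_\beta$), and with $\mu_\alpha := (\sup_{\beta<\alpha}\kappa_\beta)^+$ at nonlimit $\alpha$ and trivial forcing at limit $\alpha$, there are two problems. First, at a limit ordinal $\gamma$, nothing in the iteration ever collapses the interval $[\sup_{\beta<\gamma}\kappa_\beta,\ \kappa_\gamma)$: stage $\gamma$ is trivial and stage $\gamma+1$ collapses only between $\kappa_\gamma^+$ and $\kappa_{\gamma+1}$. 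This interval typically contains a great many regular cardinals (e.g.\ $(\sup_{\beta<\gamma}\kappa_\beta)^{++}$), none of which appears in your list and none of which is ever addressed, and $\kappa_\gamma$ itself also survives as a regular cardinal and is absent from your list. Second, even between consecutive supercompacts, collapsing with $\Coll(\kappa_\alpha^+,{<}\kappa_{\alpha+1})$ leaves $\kappa_\alpha^+$ regular in $V[G]$, and $\SNSP_{\kappa_\alpha^+}$ does not reduce to $\SNSP_{\kappa_\alpha}$: it concerns systems of width up to the predecessor of $\kappa_\alpha^+$, i.e.\ width exactly $\mu_\alpha$, which is one level above what $\SNSP_{\kappa_\alpha}$ (and Theorem \ref{single_collapse_theorem}) delivers, since the preservation lemma \ref{preservation_lemma} only produces ground-model branches for systems of width strictly below the closure degree $\mu_\alpha$ of the quotient.

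The paper's indexing is designed precisely to avoid both problems: one takes a \emph{continuous} sequence $\langle\kappa_\eta\rangle$ with $\kappa_0=\aleph_0$, $\kappa_{\eta+1}=\kappa_\eta^+$ at limit $\eta$, and $\kappa_{\eta+1}$ supercompact at successor $\eta$, then forces with $\Coll(\kappa_\eta,{<}\kappa_{\eta+1})$ at each successor $\eta$. After the iteration $\kappa_\eta=\aleph_\eta$ for all $\eta$, so the surviving regular cardinals are exactly $\aleph_0$, $\aleph_1$, the $\kappa_{\eta+1}$ for $\eta$ a successor (formerly supercompact, handled by Theorem \ref{single_collapse_theorem} plus indestructibility), and the $\kappa_{\eta+1}=\kappa_\eta^+$ for $\eta$ a nonzero limit (handled by the observation that $\kappa_\eta$ is then singular in $V[G]$, so $\width(\mc S)^+<\kappa_{\eta+1}$ forces $\width(\mc S)^+<\kappa_\xi$ for some $\xi<\eta$). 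To salvage your version you would need to change $\mu_{\alpha+1}$ to $\kappa_\alpha$ rather than $\kappa_\alpha^+$ (so no extra successor survives) and insert a nontrivial collapse $\Coll((\sup_{\beta<\gamma}\kappa_\beta)^+,{<}\kappa_\gamma)$ at each limit $\gamma$, at which point you essentially recover the paper's construction. A minor additional caveat: the paper uses inverse (full-support) limits rather than Easton support; Easton support should also work here since direct limits at inaccessibles above $\kappa_\alpha$ preserve $\kappa_\alpha$-directed closure of the tail, but this should be checked rather than asserted as routine.
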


\begin{proof}
  Let $\langle \kappa_\eta \mid \eta \in \mathrm{On} \rangle$ be an increasing, continuous sequence 
  of cardinals such that
  \begin{itemize}
    \item $\kappa_0 = \aleph_0$;
    \item if $\eta$ is a limit ordinal (including $0$), then $\kappa_{\eta+1} = \kappa_\eta^+$;
    \item if $\eta$ is a successor ordinal, then $\kappa_{\eta + 1}$ is supercompact.
  \end{itemize}
  We may assume that $\kappa_\eta$ is singular for every nonzero limit ordinal $\eta$; if not, then 
  simply truncate the universe below $\kappa_\eta$ for the least nonzero limit ordinal $\eta$ such 
  that $\kappa_\eta$ is regular (and hence strongly inaccessible).
  
  We now force with a class-length iteration of L\'{e}vy collapses to turn each $\kappa_\eta$ into 
  $\aleph_\eta$. More formally, recursively define posets $\langle \P_\eta \mid \eta \in \mathrm{On} \rangle$ 
  as follows:
  \begin{itemize}
    \item $\P_0$ and $\P_1$ are trivial forcing;
    \item if $\eta$ is a successor ordinal, then $\P_{\eta+1} = \P_\eta \ast \dot{\mathrm{Coll}}
    (\kappa_\eta, {<}\kappa_{\eta+1})$;
    \item if $\eta$ is a nonzero limit ordinal, then $\P_\eta$ is the inverse (i.e., full-support) limit 
    of $\langle \P_\xi \mid \xi < \eta \rangle$.
  \end{itemize}
  For ordinals $\xi < \eta$, let $\dot{\P}_{\xi\eta}$ be a $\P_\xi$-name for the quotient 
  $\P_\eta/\P_\xi$. Then $\dot{\P}_{\xi\eta}$ is a name for a full-support iteration of L\'{e}vy 
  collapses, each of which is forced to be $\kappa_\xi$-directed closed. It follows that 
  $\dot{\P}_{\xi\eta}$ is forced to be $\kappa_\xi$-closed. In particular, 
  $(H(\kappa_\xi))^{V^{\P_\xi}} = (H(\kappa_\xi))^{V^{\P_\eta}}$, so $V^{\P} := 
  \bigcup_{\eta \in \mathrm{On}} V^{\P_\eta}$ is a model of $\ZFC$. Also, standard arguments show that, 
  in $V^{\P}$, we have $\kappa_\eta = \aleph_\eta$ for all $\eta \in \mathrm{On}$.
  
  We claim that $\SNSP_\kappa$ holds in $V^{\P}$ for every infinite regular cardinal 
  $\kappa$. Note that the infinite regular cardinals in $V^{\P}$ are precisely the cardinals 
  $\kappa_\eta$ for which $\eta$ is either $0$ or a successor ordinal. If $\eta \leq 1$, then 
  every system $\mc S$ such that $(\width(\mc{S}))^+ < \kappa_\eta$ has finite width. It therefore 
  follows from Proposition \ref{finite_width_prop} that $\SNSP_{\aleph_0}$ and $\SNSP_{\aleph_1}$ are 
  true in $\ZFC$. 
  
  We next note that, if $\eta$ is a nonzero limit ordinal and $\mc S$ is a system such that 
  $(\width(\mc S))^+ < \kappa_{\eta + 1}$, then there must be a $\xi < \eta$ such that 
  $(\width(\mc S))^+ < \kappa_\xi$. Therefore, $\SNSP_{\kappa_{\eta+1}}$ will follow from 
  the conjunction of $\SNSP_{\kappa_{\xi}}$ for all $\xi < \eta$.
  
  We are therefore left with the task of verifying $\SNSP_{\kappa_{\eta+1}}$ for all successor ordinals 
  $\eta$. To this end, fix a successor ordinal $\eta$ and fix in $V^{\P}$ a directed partial order 
  $\Lambda$ with $d_{\Lambda} \geq \kappa_{\eta+1}$ and a $\Lambda$-system $\mc S$ with 
  $\width(\mc S)^+ < \kappa_{\eta+1}$.
  
  In $V^{P_\eta}$, we have $\kappa_\eta = \aleph_\eta$ and, since $|\P_\eta| < \kappa_{\eta+1}$, 
  we know that $\kappa_{\eta+1}$ is still supercompact. Moreover, $\P_{\eta,\eta+1} = 
  \mathrm{Coll}(\kappa_\eta, {<}\kappa_{\eta+1})$ so, by Theorem \ref{single_collapse_theorem}, 
  $\SNSP_{\kappa_{\eta+1}}$ holds in $V^{\P_{\eta+1}}$ and every $\kappa_{\eta+1}$-directed closed 
  set forcing extension thereof. Let $\zeta > \eta + 1$ be large enough so that 
  $\Lambda$ and $\mc S$ are in $V^{\P_\zeta}$. In $V^{\P_{\eta+1}}$, $\P_{\eta+1, \zeta}$ is 
  $\kappa_{\eta+1}$-directed closed, so $\SNSP_{\kappa_{\eta+1}}$ holds in 
  $V^{\P_\zeta}$. In particular, $\mc S$ has a cofinal branch in $V^{\P_\zeta}$ and hence also in 
  $V^{\P}$.
\end{proof}

We close the paper with what we feel are the most prominent remaining open questions.

\begin{question} \label{open_q}
  Suppose that $\kappa \geq \omega_2$ is a regular cardinal. Does $\TP_\kappa$ imply 
  $\cNSP_\kappa$? More specifically, if $\lambda \geq \kappa$ is a cardinal, does 
  $\TP(\kappa, \lambda)$ imply $\cNSP(\power_\kappa \lambda)$? More generally, suppose that 
  $\Lambda$ is a directed partial order with $d_\Lambda \geq \aleph_2$. Does $\TP(\Lambda)$
  imply $\NSP(\Lambda)$?
\end{question}

By Theorem A, a positive answer to the first part of Question \ref{open_q} would entail a positive 
answer to Question \ref{sch_q} and therefore a negative answer to Question \ref{global_q}. On 
the other hand, a negative answer to Question \ref{open_q} would seem to require genuinely new ideas, 
since the known methods to verify $\TP(\kappa, \lambda)$ in practice inevitably yield 
$\cNSP(\power_\kappa \lambda)$, as well.

\bibliographystyle{plain}
\bibliography{bib}

\end{document}